\newcommand{\com}[1]{}
\newcommand{\setR}{\mathbb{R}}
\newcommand{\setZ}{\mathbb{Z}}
\newcommand{\setN}{\mathbb{N}}
\newcommand{\setH}{\mathbb{H}}
\newcommand{\setS}{\mathbb{S}}
\newcommand{\h}{\xi}
\newcommand{\bh}{{\bar{\h}}}
\newcommand{\bz}{{\bar{\zeta}}}
\newcommand{\Zdeux}{{\setZ/2\setZ}}
\newcommand{\isom}{{\xrightarrow{\sim}}}
\newcommand{\met}{{\eta}}
\newcommand{\Lie}{\mathcal L}
\DeclareMathOperator{\ExT}{\Lambda}
\renewcommand{\d}{{\operatorname d}} 
\newcommand{\dom}{\d^\omega}
\newcommand{\Glie}{\mathfrak g}
\newcommand{\Hlie}{\mathfrak h}
\newcommand{\Klie}{\mathfrak k}
\newcommand{\lp}{\left(}
\newcommand{\rp}{\right)}
\newcommand{\wb}[2]{\left[#1\wedge#2 \right]}
\newcommand{\fwb}[2]{\frac{1}{2}\wb{#1}{#2}}
\newcommand{\U}{\mathcal U}
\DeclareMathOperator{\Riem}{Riem}
\DeclareMathOperator{\Ric}{Ric}
\DeclareMathOperator{\Scal}{Scal}
\DeclareMathOperator{\Ein}{Ein}
\DeclareMathOperator{\ad}{ad}
\DeclareMathOperator{\Ad}{Ad}
\DeclareMathOperator{\End}{End}
\DeclareMathOperator{\Mat}{M}
\DeclareMathOperator{\Spin}{Spin}
\DeclareMathOperator{\GL}{GL}
\DeclareMathOperator{\SpH}{Sp}
\DeclareMathOperator{\Stab}{Stab}
\DeclareMathOperator{\id}{id}
\renewcommand{\varphi}{\phi}
\DeclareMathOperator{\SO}{SO}
\renewcommand{\so}{\operatorname{\mathfrak{so}}}
\newcommand{\spin}{\operatorname{\mathfrak{spin}}}
\DeclareMathOperator{\Mink}{\m}
\DeclareMathOperator{\Cl}{Cl}
\newcommand{\m}{\Mink}
\newcommand{\Pprin}{P_{prin}}
\newcommand{\Orb}{\mathcal{O}}
\newcommand{\PMQ}{P\times_M Q}
\newcommand{\Omhor}{\Omega_\text{hor}}
\newcommand{\Ombas}{\Omega_\text{bas}}
\newtheorem{theorem}{Theorem}[section]
\newtheorem{proposition}[theorem]{Proposition}
\theoremstyle{definition}
\newtheorem{definition}[theorem]{Definition}
\newtheorem{example}[theorem]{Example}
\newtheorem{lemma}[theorem]{Lemma}
\theoremstyle{remark}
\newtheorem*{remark}{Remark}
\title{A local generalisation of frame bundles}
\author{Jérémie Pierard de Maujouy, \orcidlink{0000-0002-2996-0262}}
\affil{Université de Tours - Institut Denis Poisson\\
Facultés des Sciences et Techniques\\
Parc de Grandmont\\
37200 Tours, France}
\date{\today}
\begin{document}

\maketitle

\begin{abstract}
	Frame bundles equipped with a principal connection have their local structure characterised by a $1$-form, called the Cartan connection $1$-form, which gathers the principal connection form and the soldering form. We introduce \emph{generalised frame bundles} as a smooth manifold equipped with a coframe with value in a suitable Lie algebra and which furthermore satisfies a weakened version of the Maurer-Cartan equation. From this structure it is possible to construct a Lie algebra action on the manifold. We study the question of whether it is possible to construct a Lie group action and build a base manifold over which the initial manifold is a frame bundle. We find that generalised frame bundles can have singular underlying manifolds. 
\end{abstract}

\tableofcontents

\com{Notation pour les champs de vecteurs : $\Gamma(TM)$ ?}

\section{Introduction}\label{secno:Intro}
\subsection{Background and motivation}

Principal bundles have a preponderant position in differential geometry.
They are a geometric realisation of an ambiguity often called \enquote{gauge} and parametrised by a Lie group.
The standard model of particle physics is famously a \emph{gauge theory}, which relies crucially on the gauge principle.
Principal bundles are also omnipresent in pure differential geometry: the Hopf fibration is an example of a $\setS^1$-principal fibre bundle, another example is the bundle of linear frames of a manifold, which embodies the non-unicity of local frames.

At least as important as principal bundles themselves is the notion of principal connection and its associated covariant derivations. 
It is no overstatement to say that connections are the central object of gauge theories and that principal bundles are essentially viewed as support for them.
In Riemannian geometry as well, the Levi-Civita connection is sees way more use than the principal bundle that supports it, the bundle of orthonormal frames. It therefore makes sense to consider principal bundles equipped with principal connections as a single object.

Cartan geometry describes manifolds as a \enquote{curved} version of homogeneous spaces, much like Riemannian geometry can be understood as a curved version of Euclidean geometry. It uses principal bundles, which describe the different ways the manifold can be related to the model geometry, that are equipped with a different type of connection: a Cartan connection. The Cartan connection \enquote{solders} the manifold to the model geometry, by encoding how the model geometry can slide along a contact point with the manifold. 

In this article, we look at a generalisation of Cartan geometry, in the specific case of frame bundles with affine connections.
The generalisation is very simple: we just formalise the local structure of frame bundles with affine connections.
A space with such a structure will be called \enquote{generalised frame bundle} (with connection -- usually not-mentioned).
Since the structure of principal bundle is non-local, it cannot be captured by a purely local definition, and generalised frame bundle are not necessarily principal bundle: this is one of the matters we investigate in this article.

Why this generalisation?
It turns out that generalised frame bundles are characterised by a vector-valued $1$-form which satisfies a system of partial differential equations, along with a non-degeneracy condition. This allows for a very concise definition. Furthermore, this system of partial differential equations can be obtained as Euler-Lagrange equations. This is in fact the starting point for defining generalised frame bundles: they are naturally produced in a Lagrangian field theory which looks to produce the space-time of General Relativity by constructing its bundle of orthonormal frames, starting from a blank $10$-manifold~\cite{LFB, PDMPhD, PDMDiracSpinors}.

A benefit of this generalisation is that by setting the focus on the total space of the principal bundle, it allows for base spaces which are more singular than smooth manifolds, such as orbifolds as we will discuss.

This article aims to investigate the following questions: when is a generalised frame bundle a standard frame bundle? If not, can it be related to a standard frame bundle? What would be the underlying base space?
These questions will involve the problem of integrating a Lie algebra action into a Lie group action. This problem was studied in detail by Palais~\cite{GlobalLie} and was revisited by Kamber and Michor~\cite{LieActionInt}. The author will re-examine this matter in an upcoming paper; in the present article, we will only briefly describe the general results.

The structure of generalised frame bundles turns out to be a case of Alekseevsky and Michor's definition of Cartan connections~\cite{DiffGeoCartan}. However, while they study characteristic classes and prolongations, we are interested in comparing the global geometry with the familiar structure of frame bundles. 

\paragraph{Outline}
In Section~\ref{secno:Intro}, after general definitions, we will give an elementary illustration of a generalised frame bundle, with pictures.
In Section~\ref{secno:FB&GFB}, after a reminder on frame bundles and $G$-structures, we introduce the generalised frames bundles and standard constructions on them.
We describe a few examples in Section~\ref{secno:examples}. The remaining sections are dedicated to the question of when a given generalised frame bundle is a standard frame bundle.
In Section~\ref{secno:HManifolds} we briefly discuss the main results on the integration of a Lie algebra action to a Lie group action then give the general results on the quotient of a manifold by a proper group action. These results are applied in Section~\ref{secno:CartanInteg} to the case of a generalised frame bundle, and we prove that under certain conditions, there exists a dense open subset of a generalised frame bundle which forms a standard frame bundle above its orbit manifold. We finally discuss the case of a compact group and a compact manifold, in which many requirements are automatically satisfied.

\paragraph{Acknowledgements}
This paper presents results obtained by the author during his research as a PhD student at Université Paris Cité. Many of the ideas originate from discussions with Frédéric Hélein, the PhD supervisor.

\subsection{Conventions and definitions}


When not specified, we shall assume smooth manifolds to be \emph{Hausdorff paracompact}.
%
%
%
%


In this paper \emph{Lie algebra} will mean \ul{finite dimensional real Lie algebra}. Let us recall that for a Lie group $G$, its Lie algebra
$\Glie$ can be identified as the tangent space at identity $T_e G$ provided with the infinitesimal adjoint action bracket: 
\[
	[\h_1,\h_2] = \ad_{\h_1}(\h_2)
\]
It is naturally isomorphic to the Lie algebra of \emph{left}-invariant vector fields on $G$ (hence corresponding to a \emph{right} action on $G$). On the other hand \emph{right}-invariant vector fields on $G$ form a Lie algebra which is anti-isomorphic to $\Glie$ in the following sense: writing $L_\h$ for the right-invariant vector field such that $L_\h (e) = \h$, the following holds:
\[
	[ L_{\h_1} , L_{\h_2} ] = L_{ - [\h_1,\h_2] }
\]

Since we are interested in principal bundles and similar structures, groups and algebras will by default act \emph{on the left} on vector spaces and \emph{on the right} on manifolds.

\begin{definition}[$G$-manifold]
Given a Lie group $G$, a left/right \emph{$G$-manifold} is a smooth manifold
$M$ with a smooth left/right action of $G$, namely a smooth map 
%
\begin{gather*}
	\rho : G\times M \to M \text{  such that  } 
	\rho(g_1g_2,x)=\rho(g_1,\rho(g_2,x))
	\qquad \text{(left action)}
\\
	\rho : M\times G \to M \text{  such that  } 
	\rho(x,g_1g_2)=\rho(\rho(x,g_1), g_2)
	\qquad \text{(right action)}
\end{gather*}
We will often use an implicit notation $x \cdot g$ (or $g \cdot x$) for the action of $g$ on $x$.
\end{definition}
All Lie group actions we will consider are smooth.

\begin{definition}[$\Glie$-manifold]
Denoting by $\Glie$ a Lie algebra, a \emph{right} $\Glie$-manifold is a (smooth) manifold with an action of $\Glie$ by smooth vector fields:
\begin{gather*}
	\h\in \Glie \to \bh \in \Gamma(TX) \text{  such that  } 
	[\bh_1,\bh_2] = \overline{[\h_1,\h_2]}
\end{gather*}

The vector fields $\bh$ are called \emph{fundamental vector fields} on the $\Glie$-manifold.
\end{definition}

\begin{remark}
	A \emph{right} smooth action of a Lie group differentiates to a right action of the associated Lie algebra.
	
	Similarly, a smooth group action on the left differentiates to an action of the associated Lie algebra which reverses the bracket.
\end{remark}

Let $M$ be a $G$-manifold. To each point $x\in M$ is associated an \emph{isotropy group} 
\[
	G_x := \{ g\in G| x\cdot g = x \}
\]
and an \emph{orbital map}
\[
	g\in G\mapsto x\cdot g\in M
\]
which naturally factors through a smooth injection $G_x \backslash G \to M$ onto the orbit of $x$.

\begin{definition}[Properties of actions of Lie groups and algebras]
The action $\rho$ of a group $G$ (resp. a Lie algebra $\Glie$) on a manifold $M$ is said to be:
\begin{itemize}
	\item \emph{effective} (also \emph{faithful}) if no non-trivial element acts trivially:
		\begin{align*}
			\forall g\in G, \quad \rho(g) &= \id_M \implies g = e\\
			\forall \h\in \Glie, \quad \rho(\h) &= 0_{TM} \implies \h = 0
		\end{align*}
		
	\item \emph{free} if the isotropy group (resp. algebra) at each point is trivial:
		\begin{align*}
			\forall x\in M, \forall g\in G,& \quad x\cdot g = x \implies g = e\\
			\forall x\in M, \forall \h\in \Glie,& \quad
			\bh|_x =0 \implies \h = 0
		\end{align*}
		
	\item \emph{transitive} if the orbital maps are surjective (resp. the vectors representing the Lie algebra span the whole tangent space at each point \footnote{This is sometimes called \emph{locally transitive} or \emph{infinitesimally transitive}.}):
		\begin{align*}
			\forall x,y \in M, \ \exists g\in G,\quad x\cdot g &= y\\
			\forall x\in M, \forall X\in T_x M, \ \exists \h \in \Glie, \quad x \cdot \h &= X
		\end{align*}
\end{itemize}
\end{definition}

Furthermore, the action of a Lie algebra on a manifold is said to be \emph{complete} when the flow of every fundamental vectors fields is complete. We also say that the $\Glie$-manifold is complete.

\begin{definition}[Principal $G$-bundle]
Given a Lie group $G$, a principal $G$-bundle is a fibre bundle $P\xrightarrow{p} M$ with $G$ acting \emph{freely} on $P$ such that the fibres of $p$ are the orbits under $G$. We will follow the usual convention that principal $G$-bundles have a \emph{right} group action.
\end{definition}

\begin{definition}[Equivariant bundle]
An \emph{equivariant bundle} on a $G$-manifold $M$ is a fibre bundle $P\xrightarrow{p} M$ endowed with an action of $G$ which lifts the action on $M$:
\[ \forall (g,x)\in G\times P, \quad p(x\cdot g) = p(x)\cdot g  \]
Namely, the following diagram commutes:
\begin{center}\begin{tikzcd}
	P\times G 
		\ar[r, "\rho_P"]
		\ar[d,"p \times \id_G"] &
	G
		\ar[d,"p"]
	\\
	M \times G
		\ar[r, "\rho"] &
	M		
\end{tikzcd}\end{center}
An \emph{equivariant section} of an equivariant bundle is a section which is invariant under the action of $G$: 
\[ \phi : M\to P\, \text{ such that } \forall x\in M, \;
	\begin{cases}
		p(\phi(x)) = x\\
		\phi(x\cdot g) = \phi(x) \cdot g
	\end{cases}
\]
Namely, the following diagram commutes: 
\begin{center}\begin{tikzcd}
	P\times G 
		\ar[r] &
	P
	\\
	M \times G
		\ar[r] 
		\ar[u,"\phi \times \id_G"] &
	M
		\ar[u,"\phi"]
\end{tikzcd}\end{center}

\end{definition}
We will be mainly be interested in equivariant sections of bundles of the form $\ExT^\bullet T^*M\otimes V$ with $V$ a linear representation of $G$.
\subsection{An illustrative example of a generalised frame bundle}\label{secno:toymodel}

This article studies the structure of \enquote{generalised frame bundle} which is a generalisation of principal bundles equipped with a Cartan connection. We depict in this first section a simple and very amenable case of this kind of structure constructed by performing a \enquote{twist} on a frame bundle.

\subsubsection*{The orthonormal direct frame bundle of $\setR^2$}

\begin{wrapfigure}{R}{0.4\textwidth}
	\centering
	\caption{The frame bundle of $\setR^2$ as an (unfolded) bulk torus}\label{figno:torus1}
	\includegraphics[width=0.4\textwidth]{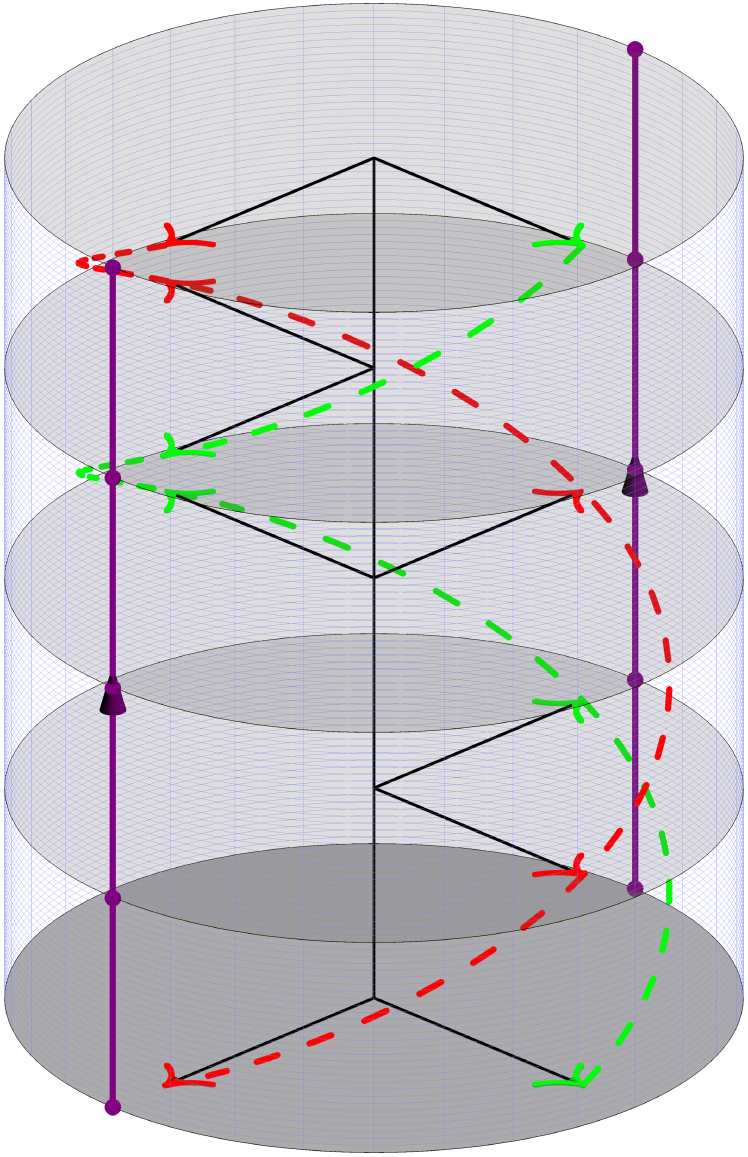}
\end{wrapfigure}

We will consider the plane as a smooth manifold $M=\setR^2$ which we will represent as an open disk. It is equipped with its standard oriented Euclidean structure. This allows us to define its (orthonormal direct) frame bundle $P$, which is the space of direct orthonormal bases of the tangent space at each point. The various frames at a given point are related by rotations, and the frame bundle forms a smooth $\SO(2)$-principal bundle over $M$: 
\[
	\SO(2) \hookrightarrow P \overset{\pi}\twoheadrightarrow M
\]

This bundle turns out to be trivialisable in many ways (for example because $M$ is contractile) and can thus be represented as an open solid torus $M\times \SO(2)$, generated by the revolution of $M$. Since $P$ is a space of frames, each point $p$ defines a frame of $T_{\pi(p)} M$. Different frames above $\pi(p)$ are related by the action of $\SO(2)$. This is represented as a cylinder in Figure~\ref{figno:torus1}: the top and bottom face should be identified, and revolutions take the form of vertical translations.
In particular, the trivialised frame bundle comes with a parallelism, namely smooth vector fields which constitute at each point a linear frame \emph{of $P$}. At each point, its first two vectors are the two frame vectors along the section (in red and green in Figure~\ref{figno:torus1}), we call them $e_1$ and $e_2$. The third vector is the normalised vector transverse to the section, generator of the revolution action of $\SO(2)$ (in purple on Figure~\ref{figno:torus1}). We will call it $\xi$.

The structure of the frame bundle imposes that the frame $(e_1,e_2)$ be \emph{equivariant} under rotation around the revolution axis. If we call $R(\theta)$ the revolution of angle $\theta\in \setR$ this means that:

\begin{subequations}
\begin{empheq}[left=\empheqbiglbrace]{align}
	R(\theta)^* e_1 &= \cos(\theta)e_1 + \sin(\theta)e_2\\ 
	R(\theta)^* e_2 &= -\sin(\theta)e_1 + \cos(\theta)e_2
\end{empheq}
\end{subequations}

Effectively, the frame of $M$ is rotating when progressing along the revolution. Since $\xi$ generates the action of $\SO(2)$, there is an infinitesimal version of the equivariance equations:
\begin{subequations}\label{eqno:Zequiv}
\begin{empheq}[left=\empheqbiglbrace]{align}
	[\xi,e_1] &= e_2\\ 
	[\xi,e_2] &= -e_1
\end{empheq}
\end{subequations}
The two sets of equations are equivalent since $R(\theta)$ is the flow $\exp(\theta \xi)$.


Now, we want to present a different representation of this situation. The frame rotation's can be \enquote{untwisted} by applying a diffeomorphism of the solid torus (with nontrivial mapping class). Using the standard coordinates $(x,y)$ on the section $M$ and a cyclic coordinate $z$ on $\SO(2)$, the diffeomorphism takes the following form:
\begin{empheq}[left=\empheqbiglbrace]{align*}
	M\times\SO(2) &\to M\times \SO(2)\\
	\begin{pmatrix} x\\y\\z \end{pmatrix}
	&\mapsto
	\begin{pmatrix}
	\cos(z)x+\sin(z)y \\ -\sin(z)x+\cos(z)y \\ z
	\end{pmatrix}
\end{empheq}
In this representation the fibre above a given point $m\in M$ is no longer a straight circle but twists around the torus. This is depicted in Figure~\ref{figno:torus2}.

The equivariance equations still hold, but in this representation they do not express the rotation of the frame vector but rather the \enquote{twist} of the action of $\SO(2)$.
What we are interested in are spaces which \emph{locally} present the same structure as a frame bundle, thus we will focus on Equations \eqref{eqno:Zequiv}. We ask the following question: if we have a solid torus $P$ with a frame field $(e_1,e_2,\xi)$ satisfying Equations~\eqref{eqno:Zequiv}, can it always be identified with the frame bundle of a smooth manifold?


\subsubsection*{A twisted frame bundle}

\begin{wrapfigure}[24]{R}{0.4\textwidth}
	\centering
	\caption{The frame bundle of $\setR^2$ as a twisted torus}\label{figno:torus2}
	\includegraphics[width=0.4\textwidth]{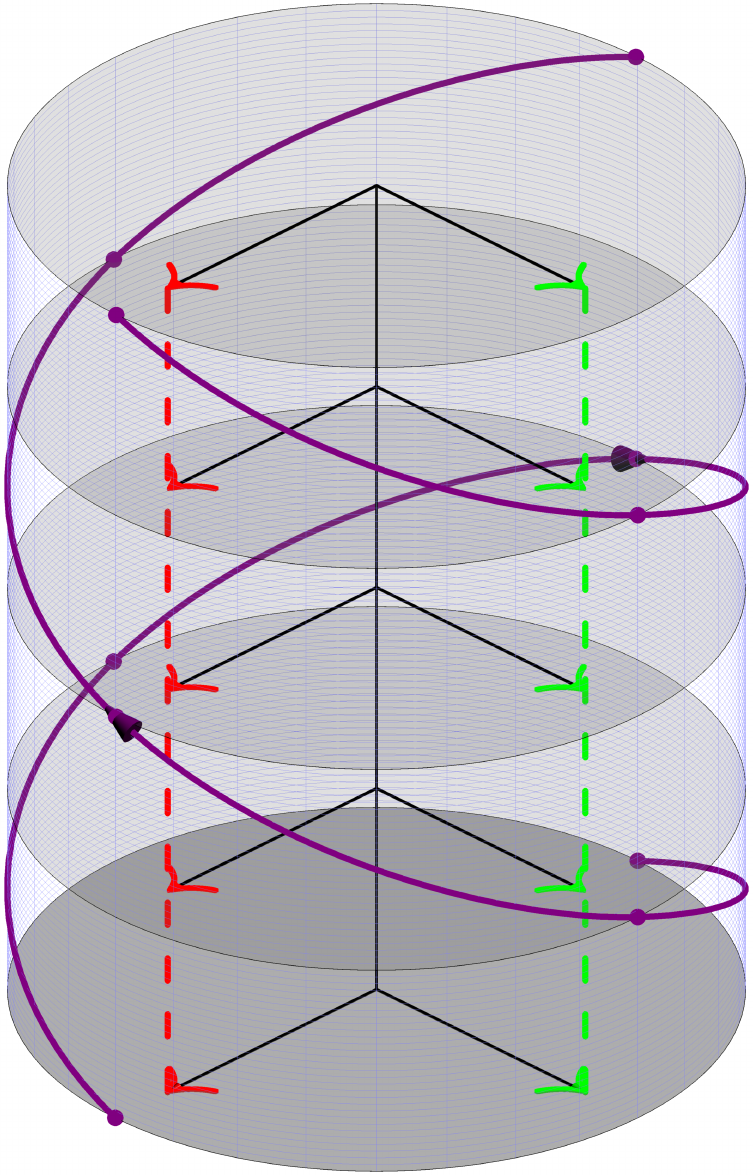}
\end{wrapfigure}

Counterexamples are readily found: it is possible that the action of $\xi$ does not integrate to an action of $\SO(2)$ which is free (all isotropy groups are trivial). More precisely it is possible for the orbits under the action of $\xi$ to have different lengths (finite or not). A simple example is depicted in Figure~\ref{figno:torus3} -- the top and the bottom face should be identified there as well.
It can be seen as a $(2+1)$-dimensional version of the M\"obius strip. Here infinitesimal equivariance holds with a factor%
\footnote{It may seem as though any real factor could be used but $\frac12$ is of specific relevance as it can be generalised to the \emph{projective} quotients of the higher dimension orthogonal groups. This is discussed in Section~\ref{secno:examples}}
of $\frac12$ for the generator $\xi$:
\begin{subequations}\label{eqno:Zequiv12}
\begin{empheq}[left=\empheqbiglbrace]{align*}
	[\xi,e_1] &= \frac12 e_2\\ 
	[\xi,e_2] &= -\frac12 e_1
\end{empheq}
\end{subequations}

The orbit of the origin of $M$ closes over one revolution, but the other orbits (in purple on Figure~\ref{figno:torus3}) require two revolutions to close. Although the action of $\xi$ can be integrated into a group action of $\setR$ on $M\times \SO(2)$, since points have different isotropy groups is it not possible to factor the action to a quotient $\SO(2)$ of $\setR$ such that all isotropy groups are trivial.

In this example if $P$ were to be interpreted as a frame bundle, it would be over its orbit space, which can be identified as $\setR^2/(x\sim -x)$. This is a singular space, with a conic singularity at the origin -- that is a manifestation of the varying size of the orbits. Therefore, in this example, $P$ equipped with the transverse vectors $(e_1,e_2)$ and the generator $\xi$, although it cannot be identified with the frame bundle of a smooth manifold, may still be interpreted as a frame bundle over a singular space.

This example is \enquote{proper} in a technical sense, which ensures the singularities are tame. One could study cases in which the \enquote{twist} of the orbits under $\xi$ is irrational so that the orbits do not close (except the central orbit) but are dense on the surface of smaller tori. These cases have harsher singularities and will not be studied in this paper.

In this paper we want to investigate such \enquote{generalised frame bundles} which have the same local structure as frame bundles but are not a priori actual frame bundles over smooth manifolds. Our study will generalise the case of the solid torus to a more general Lie group with a linear representation of any dimension. The local structure only defines the action of a Lie algebra, so we will have to attempt constructing an action of a Lie group starting from a Lie algebra action.

\subsubsection*{The structure of a connection}

\begin{wrapfigure}[22]{R}{0.4\textwidth}
	\centering
	\caption{The torus as a \enquote{twisted} frame bundle}\label{figno:torus3}
	\includegraphics[width=0.4\textwidth]{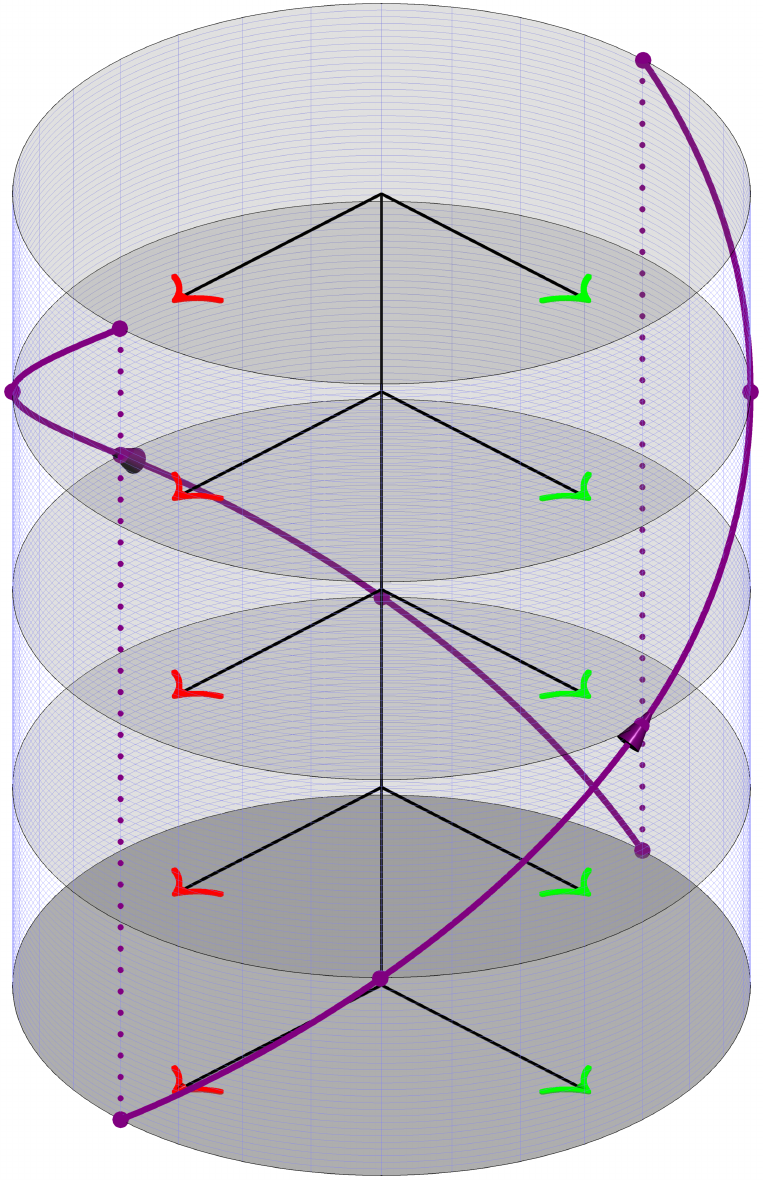}%
\end{wrapfigure}

The reader may have noticed that the trivialisation of the orthonormal frame bundle is not uniquely defined by the frame bundle structure. It is equivalent to the additional structure of a (metric and torsionless) \emph{flat} affine connection. The Euclidean plane inherits one induced by its affine space structure. In particular, the frame field which equips $P$ depends on this choice of connection. The structure we want to study on $P$ is more properly called \enquote{generalised frame bundle with connection} but we will use \enquote{generalised frame bundle} as a shorthand.

Indeed a more convenient way to encode the parallelism $(e_1,e_2,\xi)$ of $P$ is by using the dual coframe, which is a family of three $1$-forms $(\varpi^1,\varpi^2,\varpi^\xi)$. When $P$ forms an actual frame bundle, the forms $\varpi^1$ and $\varpi^2$ correspond to the so-called \enquote{solder form} of the frame bundle and the form $\varpi^\xi$ is a connection $1$-form. They are gathered in what is called a \emph{Cartan connection (1-)form}. Using this $3$-component $1$-form $\varpi$ is very convenient to formulate the infinitesimal equivariance equations~\eqref{eqno:Zequiv} (details in Section~\ref{secno:GFB}).

Because the frame bundle is the space supporting affine connections on the base manifold, this implies that covariant physical field theories can be formulated on the frame bundle. By extension, it is possible to formulate them on generalised frame bundles: they have all the required geometrical structure. This is of particular interest as a mean to formulate field theories on singular spaces.  For example, in higher dimension, given the Cartan connection form $\varpi$ it is very easy to construct the associated Riemann curvature tensor or the Einstein tensor, so that one can study Einstein's field equations (see the example in Section~\ref{secno:exdynGFB}).

\section{Generalising frame bundles}\label{secno:FB&GFB}
In this section, after a reminder of the structure of frame bundles with connection, we introduce the notion of generalised frame bundles.
They allow to define tensorial quantities in a similar way to frame bundles.

\subsection{Frame bundles and connections}

We start this section with general definitions about frame bundles and connections. We only give a succinct account of the structure, a detailed treatment can be found in general references such as~\cite{KobaNomi1}.
Let us recall the following notion which we will be using throughout.
\begin{definition}[Vertical vectors, horizontal forms]
Let $P\to M$ be a fibre bundle.
A vector (or a vector field) on $P$ is called \emph{vertical} if it projects to $0$ in $TM$.
A differential form on $P$ is called \emph{horizontal} if it has vanishing contraction with every vertical vector.
\end{definition}

We will also make recurrent uses of various notions of pullback:
\begin{definition}[Pullback]
Let $f : X \to Y$ be a smooth map.
\begin{enumerate}
	\item Given a vector-valued differential $k$-form $\alpha$ on $Y$, its pullback $f^*\alpha$ through $f$ is the following vector-valued differential $k$-form on $X$:
	\[
		\forall x\in X, \, v_1, \dots, v_k \in T_x X, \quad
		f^*\alpha(v_1, \dots, v_k) = \alpha(\d f(v_1), \dots \d f (v_k))
	\]

	\item Given a fibre bundle $P \xrightarrow{\pi} M$, the \emph{pullback bundle through $f$} is the fibred product 
	\[
		f^* P :=
			X\times_f P
				= \{ (x,p)\in X \times P \, | \, f(x) = \pi(p) \}
	\]
	It is a smooth manifold which is naturally a fibre bundle above $X$.

	\item When the above bundle $P$ is a principal $G$-bundle equipped with a principal connection form $\omega$,
	the pullback bundle $f^*P$ is a principal $G$-bundle and the pullback form $f^*\omega$ defines a principal connection form on $f^*P$.
	The corresponding connection is called the \emph{pullback connection} (through $f$).
\end{enumerate}
\end{definition}

\subsubsection{The frame bundle of a manifold}

Let $M$ be a smooth $n$-manifold. 
At each point $m\in M$ the tangent space $T_m M$ is a $n$-dimensional vector space and has a $n^2$-dimensional manifold of linear frames. They can be gathered into a fibre bundle over $M$ which is called the \emph{(linear) frame bundle of $M$} and which we write $\GL(M)$.

Since the set of frames of $T_m M$ has a natural right action of $\GL(n)$ which is both transitive and effective, $\GL(M)$ has the structure of a $\GL(n)$-principal bundle over $M$.
\com{Faut-il construire l'action ?}
Furthermore it is provided with a natural $\setR^n$-valued $1$-form which we now construct.

\subsubsection*{The canonical solder form}
Let $\pi : \GL(M)\to M$ be the fibration map. The bundle 
\[
	\pi^*TM \simeq \GL(M)\times_M TM
\]
is made of pairs $(p\in \GL(M), X\in T_{\pi(p)} M)$. 
Since $\GL(M)_m$ is the manifold of the linear isomorphisms of $T_m M$ with $\setR^n$, there is a natural \enquote{tautological} isomorphism of vector bundles
\[
	u :\pi^*TM \xrightarrow{\sim} \GL(M) \times \setR^n
\]
which converts tangent vectors to their coordinates in each frame.
Furthermore $u$ is equivariant under the right action of $\GL(n)$:
writing $R_g$ for the right action of $g\in\GL(n)$ on $\GL(M)$, the following diagram commutes
\[\begin{tikzcd}
	\pi^*TM 
		\ar[r,"u"]
		\ar[d,"{R_g}_*"]
		&
	\setR^n
		\ar[d,"g^{-1}"]
		\\
	\pi^*TM
		\ar[r,"u"]
		&
	\setR^n
\end{tikzcd}\]

The differential of $\pi$ can be represented as a map
\[
	\d \pi : T\GL(M) \to \pi^*TM
\]
so that composition gives a linear map
\[\begin{tikzcd}
	T\GL(M) 
		\ar[r,"\d\pi"']
		\ar[rr, bend left=20, "\theta"]
		&
	\pi^*TM
		\ar[r,"u"']
		&
	\setR^n
\end{tikzcd}\]

The fibration map $\pi$ is invariant under the action of $\GL(n)$: 
\[\pi\circ R_g = \pi\]
thus for all $p\in \GL(M)$, the following holds:
\[ \d\pi \circ \d R_g
	= {R_g}_*\d \pi \]
with ${R_g}_* : \pi^*TM \isom {R_g}_*\pi^*TM$ the natural identification.
Finally 
\[
	\theta \circ \d R_g|_{p}
	= u \circ \d \pi \circ \d R_g
	= u \circ {R_g}_* \d \pi
	= g \cdot u \circ \d \pi
	= g \cdot \theta
\]

The linear map $\theta$ can be understood as a (horizontal) $\setR$-valued $1$-form on $\GL(M)$ and is called the \emph{canonical solder form} of $\GL(M)$.
As we just proved, it is equivariant under the action of $\GL(n)$.

\subsubsection*{Vectors, tensor fields and differential forms}

Let $X$ be a vector field on $M$. It can be lifted to $\GL(M)$ into a section $\pi^*X$ of $\pi^*TM$ which is constant on the fibres of $\pi$. Applying $u$, the section $\pi^*X$ can be identified to a $\setR^n$-valued map, which is now $\GL(n)$-equivariant since $u$ is. Conversely an equivariant $\setR^n$-valued map on $\GL(M)$ can be identified to a $\GL(n)$-invariant $\pi^*TM$-valued map which can then be factored to a vector field on $TM$.
\begin{center}
	\begin{tikzcd}
		X \in \Gamma(M,TM)
			\ar[r,"\pi^*"]
			&
		\pi^*X \in \Gamma(\GL(M), \pi^*TM)
			\ar[r,"u"]
			\ar[d, "R_g^*"]
			&
		u(\pi^*X) \in \Gamma(\GL(M),\setR^n)
			\ar[d,"R_g^*"]
			\\
			&
		\pi^*X
		 &
		g^{-1}\cdot u(\pi^*X)
	\end{tikzcd}
\end{center}

This construction works not only for vectors but also for tensors living in any tensor product of $TM$ and $T^*M$, as well as tensor-valued horizontal differential forms. \com{Détailler pourquoi l'horizontalité est importante ?}

\begin{theorem}\label{thmno:Omhor}
Using a superscript $\GL(n)$ to denote $\GL(n)$-equivariant forms, there is an isomorphism
\begin{equation}
	\Omega^k_{hor}(\GL(M),\setR^n)^{\GL(n)} \simeq \Omega^k(M,TM)
\end{equation}\com{Est-il nécessaire d'écrire *ici* les formes différentielles ou peut-on simplement mentionner la généralisation aux formes différentielles plus loin ?}
and similarly when $TM$ is replaced with an associated tensor bundle.
\end{theorem}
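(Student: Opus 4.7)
The strategy is to build the correspondence explicitly in both directions using the tautological equivariant isomorphism $u : \pi^*TM \xrightarrow{\sim} \GL(M)\times \setR^n$ constructed above. The role of horizontality is to let us descend along the fibration, while the role of equivariance is to identify the fibrewise value with a well-defined element of $TM$ independent of the chosen frame.

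First I would construct the map $\Omega^k(M,TM) \to \Omega^k_{hor}(\GL(M),\setR^n)^{\GL(n)}$. Given $\beta \in \Omega^k(M,TM)$, define, for $p\in \GL(M)$ and $v_1,\dots,v_k \in T_p\GL(M)$,
\[
	\alpha_p(v_1,\dots,v_k) := u\bigl(p,\, \beta_{\pi(p)}(\d\pi(v_1),\dots,\d\pi(v_k))\bigr).
\]
Horizontality is immediate from $\d\pi$ annihilating vertical vectors and from $\beta$ being multilinear. Equivariance follows from $\pi \circ R_g = \pi$ combined with the commutative diagram defining $u$: one obtains $R_g^*\alpha = g^{-1}\cdot \alpha$. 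Smoothness is inherited from the smoothness of $\beta$, $u$ and $\d\pi$.

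Next I would construct the inverse. Given $\alpha \in \Omega^k_{hor}(\GL(M),\setR^n)^{\GL(n)}$, define, for $m\in M$ and $X_1,\dots, X_k \in T_m M$,
\[
	\beta_m(X_1,\dots,X_k) := u^{-1}\bigl(p, \alpha_p(\tilde X_1,\dots, \tilde X_k)\bigr),
\]
where $p$ is any point of $\pi^{-1}(m)$ and $\tilde X_i \in T_p\GL(M)$ any lift of $X_i$. The two well-definedness checks are where the defining properties of $\alpha$ are used. Independence of the chosen lifts follows from horizontality: two lifts differ by a vertical vector, on which $\alpha$ vanishes in that slot by multilinearity. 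Independence of the basepoint $p$ in the fibre comes from the equivariance of both $\alpha$ and $u$: changing $p$ to $p\cdot g$ replaces lifts by $\d R_g$ applied to them (modulo vertical pieces), which produces a factor $g^{-1}$ on $\alpha_p$, and this factor is exactly cancelled by the equivariance of $u^{-1}$.

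The two constructions are inverse to each other by direct unwinding of the definitions, and both are $\Cinf(M)$-linear, so they assemble into the claimed isomorphism. The main technical point, and the only part that is not purely formal, is the smoothness of $\beta$ given $\alpha$. I would establish it by working in a local trivialisation of $\GL(M)$ over an open $U\subset M$: any local section $\sigma : U \to \GL(M)$ gives a smooth local lift of vector fields, and $\beta|_U$ is obtained by evaluating $\alpha$ on $\sigma$ and applying $u^{-1}(\sigma(\cdot),\,\cdot)$, all of which are smooth. Finally the same argument applies verbatim when $\setR^n$ on the $\GL(M)$ side and $TM$ on the $M$ side are replaced by any associated tensor representation, since the tautological isomorphism $u$ extends tensorially and remains $\GL(n)$-equivariant with the appropriate representation.
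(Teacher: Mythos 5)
Your proof is correct and follows essentially the same route as the paper: the paper establishes the correspondence for vector fields by composing the pullback $\pi^*$ with the tautological equivariant isomorphism $u$, and then asserts that the same construction extends to horizontal tensor-valued forms, which is precisely what you carry out in detail (horizontality giving independence of the lift, equivariance of $\alpha$ and $u$ giving independence of the point in the fibre). Your write-up is in fact more complete than the paper's, which leaves the well-definedness and smoothness checks implicit.
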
%
For example, this isomorphism maps the solder form $\theta$ to the identity of $TM$, represented as a $TM$-valued $1$-form.

Geometric structures can be defined on $M$ by specifying a restricted class of linear bundles. The corresponding structures, called $G$-structures, are presented in the next section.

\subsubsection{$G$-structures}\label{secno:Gstruct}

When $M$ has an orientation, it is possible to define a notion of \enquote{direct} frames. Conversely, given the class of direct frames, it is possible to identify a corresponding orientation. The same is true in pseudo-Riemannian geometry: it is equivalent to specify a metric or to specify the class of (pseudo)-orthonormal frames (that respect the ordered signature). The notion of $G$-structure sets the focus on the class of frames structured by the group relating these frames. More concretely: 

\begin{definition}[$G$-structure]\label{defno:Gstruct}
	Let $M$ be an $n$-manifold. Let $G$ be a Lie group \emph{equipped with an action on $\setR^n$}:
		\[ G\to \GL(n)  \]
	
	A \emph{$G$-structure} on $M$ is the data of a $G$-principal bundle $P\xrightarrow{\pi_P} M$ which has a \emph{$G$-equivariant bundle map} to $\GL(M)$, namely, the following diagram commutes:
	\[\begin{tikzcd}
		P \times G
			\ar[rr]
			\ar[d]
			&&
		\GL(M) \times \GL(n)
			\ar[d]
		\\
		P
			\ar[rr]
			\ar[rd]
			&&
			\GL(M)
			\ar[ld]
		\\
		&M&
	\end{tikzcd}\]
	In other words, a $G$-structure is a reduction of structure group of the linear frame bundle along $G\to \GL(n)$.
\end{definition}
Although the usual denomination is \enquote{$G$-structure}, the action of $G$ on $\setR^n$ is also an essential part of the defining data. As is standard, the action will often be left implicit. Note that we do not require the action does not have to be faithful, which will be especially relevant in Section~\ref{secno:PrinGstruc}. A general reference on $G$-structures is~\cite{TransformationGroups}.
	

\begin{example}
	$\GL(M)$ defines a canonical $\GL(n)$-structure on $M$, which is part of the differential manifold structure.
\end{example}
	
\subsubsection*{Solder forms}

The data of an equivariant map $P\to \GL(M)$ can be encoded in a $\setR^n$-valued $1$-form $\alpha$ on $P$ which satisfies the following two requirements:
\begin{itemize}
	\item The kernel is the vertical tangent bundle 
		\[VP := \ker (\d \pi_P : TP \to \pi_P^*TM) \subset TP \]
	\item It is equivariant under the action of $G$ on $\setR^n$.
\end{itemize}
Such a $1$-form is called a \emph{solder form} (sometimes \emph{soldering form}).

Stating that the kernel is $VP$ is equivalent to asserting that $\alpha$ factors to an injective bundle mapping
	\[ \pi_P^*TM \hookrightarrow P\times\setR^n \]
which is then necessarily an isomorphism because the fibers have the same dimension. $G$-equivariance of the $1$-form is equivalent to the $G$-equivariance of this mapping, so that such a $1$-form indeed gives a $G$-equivariant mapping 
\[P \to \GL(M) \]
It is then easily shown that the solder form on $P$ is the pull back of $\theta$ by the obtained map $P\to \GL(M)$, and conversely the pullback of $\theta$ by an equivariant bundle map is always a solder form on $P$.

Furthermore, $\alpha$ inducing an equivariant bundle isomorphism
\[
	\pi_P^*TM \isom P\times \setR^n
\]
implies that it further factors to a vector bundle isomorphism over $M$:
\[
	TM \simeq \pi_P^*TM/G \isom P\times_G \setR^n
\]
From this perspective, a solder form on $P$ is associated to a $P\times_G \setR^n$-valued $1$-form on $M$ which is \enquote{non-degenerate}.
Indeed this gives another characterisation of $G$-structures: they are $G$-principal bundles equipped with a vector bundle isomorphism
\[
	TM \simeq P\times_G \setR^n
\]

\subsubsection*{Associated tensor fields}

Tensor fields and tensor-valued differential forms can be defined using the principal bundle of a $G$-structure: by the same construction as for $\GL(M)$, $G$-equivariant $\setR^n$-valued forms can be identified with $G$-invariant $\pi_P^*TM$-valued forms. The same goes for the tensor bundles~\cite[Section 19]{TopicsDiffGeo}.

Furthemore, given any finite dimensional linear representation $V$ of $G$ it is possible to consider $G$-equivariant $V$-valued fields (and differential forms) on $P$. They are in natural bijection with the sections of the \emph{associated bundle} $P \times_G V$ on $M$
which is a vector bundle over $M$ with typical fibre $V$.

Similarly, there is an isomorphism between the spaces of vector-valued differential forms
\[
	\Omhor^k(P,V)^G \simeq \Omega^k(M,P[V])
\]
along the same lines as Theorem~\ref{thmno:Omhor}

\subsubsection{Principal connections on $G$-structures}

Let $M$ be a manifold with a $G$-structure given by a principal bundle $P\to M$ with a solder form $\alpha$.
The action of $G$ on $P$ induces an action of the associated Lie algebra $\Glie$ by differentiation of the $1$-parameter subgroups of diffeomorphisms. For $\xi\in\Glie$ we write $\bar\xi$ the corresponding vector field on $P$. Note that the $\bar\xi$ are vertical (they project to $0$ in $TM$) and they span all the vertical directions.

\begin{definition}[Principal connections]
	A \emph{$G$-principal connection $1$-form} on $P$ is given by a $\Glie$-valued $1$-form $\omega$ on $P$ such that:
	\begin{itemize}
		\item It is normalised for the action of $\Glie$:
			\[ \forall \xi\in\Glie, \qquad \omega(\bar\xi) = \xi \]
		\item It is $G$-equivariant as a $\Glie$-valued $1$-form: 
			\[ R_g^*\omega = \Ad_{g}^{-1} \omega  \]
	\end{itemize}
	The adjective \enquote{$G$-principal} will most often be omitted. We say that a connection $1$-form on $P$ defines a \emph{connection} on $P$.
\end{definition}

The kernel of a connection $1$-form is a supplementary subspace to the vertical tangent space and is called \emph{horizontal tangent space} to $P$, we will write it $HP$. A connection hence defines projections of a vector on $P$ to horizontal and vertical components.

\subsubsection*{Covariant derivative}

A principal connection $\omega$ allows defining derivatives of sections of associated bundles on $M$. Let $V$ be a representation of $G$ and $\psi\in \Gamma(P, V)^G$. The \emph{covariant derivative} of $\psi$ is defined as follows:
\[
	\dom \psi := \d \psi + \omega\cdot \psi\in \Omega^1(P,V)
\]
with $\omega \cdot \psi$ the $V$-valued $1$-form $X\mapsto \omega(X)\cdot \psi$. The $1$-form $\dom \psi$ is horizontal and equivariant.

This definition extends to $V$-valued equivariant forms: let $\psi \in \Omega^k_{hor}(P,V)^G$. We define the following product
\[\begin{tikzcd}
	\Omega^1(P,\Glie)\otimes \Omega^k(P,V)
		\ar[r] &
	\Omega^{1+k}(P,\Glie\otimes V)
		\ar[r] &
	\Omega^{k+1}(P,V)
		\\
	\omega \otimes \psi
			\ar[r, mapsto] &
	\omega\wedge \psi
			\ar[r, mapsto] &
	\omega \cdot \psi
\end{tikzcd}\]
It is used in the following definition of the \emph{covariant exterior derivative} of $\psi$:
\begin{equation}
	\dom \psi = \d \psi + \omega\cdot \psi
\end{equation}
It can be shown to correspond to the precomposition of $\d \psi$ with the projection \[\Lambda^{k+1} TP \to \Lambda^{k+1} HP \subset \Lambda^{k+1} TP \]

The $(k+1)$-form $\dom \psi$ is then an horizontal $V$-valued $(k+1)$-form which can be proved to be equivariant: as an element of $\Omega^{k+1}_{hor}(P,V)^G$ it is associated to a $P\times_G V$-valued $(k+1)$-form on $M$. In this way a connection allows defining covariant derivatives of sections of bundles on $M$, as summed up in the following diagram:
\[\begin{tikzcd}
	\Omhor^k(P, V)^G
		\ar[d, "\sim" sloped]
		\ar[r, "\dom"]
	&
	\Omhor^{k+1}(P, V)^G
		\ar[d, "\sim" sloped]
	\\
	\Omega^k(M, P\times_G V)
		\ar[r, "\d^\nabla"]
	&
	\Omega^{k+1}(M, P\times_G V)
\end{tikzcd}\]
Our focus will be on the fields defined over $P$ and not over $M$.

\subsubsection*{Curvature and torsion forms}

The obstruction to the integrability of the horizontal distribution $HP$ is contained in the \emph{curvature $2$-form} associated to the connection, defined as follows: 
\begin{equation}
	\Omega := \d \omega + \fwb{\omega}{\omega}
\end{equation}
with 
\[
	\wb{\beta}{\gamma}(X,Y) := [\beta(X),\gamma(Y)] - [\beta(Y),\gamma(X)]
\]

The curvature $2$-form is also involved in the square of the covariant exterior derivative operator through the following Bianchi-type identity: 
\[
	\forall\psi\in\Omega^k_{hor}(P,V)^G\qquad
			\dom\dom \psi = \Omega\cdot \psi
\]
using a similar notation to $\omega\cdot\psi$ for the product
\[
		\Omega^2(P,\Glie)\otimes \Omega^k(P,V) \to \Omega^{k+2}(P,V)
\]

Furthermore, the $G$-structure equips $P$ with a solder form $\alpha$. Its covariant exterior derivative is called the \emph{torsion} of $\omega$: 
\begin{equation}
	\Theta := \dom \alpha \in \Omhor^2(P,\setR^n)
\end{equation}
%
\subsubsection{Principal bundles on $G$-structures}\label{secno:PrinGstruc}
\com{Nommer la section produit de fibrés principaux ?}

Let $M$ be a manifold with a $G$-structure given by a principal bundle $P\to M$ equipped with a solder form $\alpha$. Let there be another principal bundle $Q\to M$ with a structure group $K$. We are looking for a way to characterise the structure of the principal bundle $Q$ as a structure on $P$.

The principal bundle $Q$ can be pulled back through $\pi_P$ to an $K$-principal bundle $\pi_P^*Q$ over $P$. The fact that the principal bundle over $P$ is a pullback bundle can be characterised by the structure of \emph{$G$-equivariant $K$-principal bundle}: the action of $G$ on $P$ lifts to morphisms of principal bundle of $\pi_P^*Q$ (which are identified with the identity on the spaces $Q_{\pi_P(p)}$). Namely, the following diagram commutes for every $g$ in $G$: 
\begin{equation*}
\begin{tikzcd}[column sep = huge]
	\pi_P^*Q
		\ar[r, "{
				(p,q)\mapsto (p\cdot g, q)
				}"]
		\ar[d]
	& \pi_P^* Q
		\ar[d]
	\\
	P
		\ar[r,"\cdot g"]
	&
	P
\end{tikzcd}
\end{equation*}

The structure of $\pi_P^*Q$ is clear when it is viewed as a fibred product of bundles: 
\[\begin{tikzcd}[column sep=6em, row sep=large]
	P\times_M Q \simeq \pi_P^*Q
		\ar[d,
			"\substack{
				K\text{-principal}\\
				G\text{-equivariant}
			}"',
			"\phi_P"
		]
		\ar[r,
			"\substack{
				G\text{-principal}\\
				K\text{-equivariant}
			}",
			"\phi_Q"'
		]
		\ar[rd, phantom, "\scalebox{2.5}{$\lrcorner$}", very near start]
	&Q
		\ar[d,"K\text{-principal}", "\pi_Q"']
	\\
	  P
	  	\ar[r,"G\text{-principal}"',"\pi_P"]
	& M
\end{tikzcd}\]
%
In particular the symmetry between the roles of $P$ and $Q$ is made manifest. The space $P\times_M Q$ has a structure of $K$-principal bundle over $P$, of $G$-principal bundle over $Q$ and of $G\times K$-principal bundle over $M$. In particular, the actions of $G$ and $K$ commute which implies that the bundle fibration over $Q$ is $K$-equivariant. 

Such structures have been explored in physics (see Kerner, Nikolova and Rizov's work \cite{TwoLevelKK}) and generalised in mathematical studies (see Lang, Li and Liu's paper \cite{DoubleBundles}).

\subsubsection*{Principal connections}

Assume now that $Q$ is equipped with a $K$-principal connection form $A\in\Omega^1(Q,\Klie)$. The connection pulls back under the projection $P\times_M Q \xrightarrow{\phi_Q} Q$ into an $K$-principal connection on $P \times_M Q$ which we write $\phi_Q^*A$ . Since $\phi_Q$ is invariant under precomposition by the action of elements of $G$,

\[ \phi_Q \circ R_g = \phi_Q 	\]
the same goes for $\phi_Q^*A$: 
\[
	R_g^*\phi_Q^*A = \phi_Q^* A
\]
Thus the pullback connection on $P\times_M Q$ is $G$-invariant, or $G$-equivariant for the trivial adjoint action of $G$ on $\Klie\subset \Glie\times \Klie$. Similarly, a principal connection on $P$ can be pulled back to $\PMQ$ to a $Q$-invariant connection.

Thus a principal connection on $P$ and a principal connection on $Q$ can be pulled back to $\PMQ$ and summed into a $\Glie\oplus \Klie$-valued $1$-form which is $G\times K$-equivariant and normalised on $\Glie\oplus \Klie$: this is exactly a $G\times K$-principal connection on $\PMQ$. Conversely, given a $G\times K$-principal connection on $\PMQ$, its respective $\Glie$ and $\Klie$ components are $G$-equivariant and $K$-invariant, respectively $K$-equivariant and $G$-equivariant, and induce principal connections on $P$ and $Q$. This is summed up in the following theorem:

\begin{theorem}
	Let $P\to M$ be a $G$-principal bundle and $Q\to M$ a $K$-principal bundle. There is a natural bijection between $G\times K$-principal connections on $\PMQ$ and couples of principal connections on $P$ and $Q$.
\end{theorem}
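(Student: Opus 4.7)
The plan is to construct both maps of the bijection explicitly and verify they are mutually inverse. In one direction, given a $G$-principal connection $\omega_P$ on $P$ and a $K$-principal connection $\omega_Q$ on $Q$, I would set
\[
	\omega := \phi_P^*\omega_P + \phi_Q^*\omega_Q \in \Omega^1(\PMQ, \Glie\oplus\Klie)
\]
and check the two axioms. For normalisation: since $\phi_Q$ collapses $G$-fundamental vector fields to $0$ (the $Q$-factor being unchanged) and symmetrically for $\phi_P$ on $\Klie$-fundamental fields, $\omega$ evaluates to $(\xi,\eta)$ on $\overline{(\xi,\eta)}$ as required. For equivariance, the $G$-action preserves $\phi_Q^*\omega_Q$ (because $\phi_Q$ is $G$-invariant) and acts on $\phi_P^*\omega_P$ via $\Ad_g^{-1}$; symmetrically for $K$. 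Since the $G\times K$-adjoint action on $\Glie\oplus\Klie$ is block-diagonal, the resulting $1$-form $\omega$ is indeed $G\times K$-equivariant.

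Conversely, given a $G\times K$-principal connection $\omega$ on $\PMQ$, I would decompose it as $\omega = \omega_\Glie + \omega_\Klie$ according to the direct sum $\Glie\oplus\Klie$ and argue each component descends to a principal connection form. The crucial step is to show $\omega_\Glie$ is \emph{$K$-basic} along $\phi_P$, that is both $K$-invariant and $K$-horizontal: invariance comes from the block-diagonal structure of the $G\times K$-adjoint action on $\Glie\oplus\Klie$, while horizontality follows from $\omega(\bar\eta) = \eta\in\Klie$ for $\eta\in\Klie$, forcing $\omega_\Glie(\bar\eta) = 0$. Once $\omega_\Glie$ is known to descend to some $\omega_P\in\Omega^1(P,\Glie)$, its normalisation on $\Glie$-fundamental fields and its $G$-equivariance are inherited directly from those of $\omega$. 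A symmetric argument produces $\omega_Q\in\Omega^1(Q,\Klie)$. The two constructions are then visibly inverse to each other: reassembling $\phi_P^*\omega_P + \phi_Q^*\omega_Q$ recovers $\omega$ because both sides have matching $\Glie$ and $\Klie$ components, and the descent of a basic form is unique.

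The main obstacle I anticipate is establishing the $K$-invariance of the $\Glie$-component of a $G\times K$-connection cleanly: this relies precisely on the block-diagonal shape of the $G\times K$-adjoint action on $\Glie\oplus\Klie$, which is what makes the decomposition compatible with equivariance and allows one to descend each component independently. Once this observation is in place, the remaining verifications amount to chasing the equivariance, normalisation, and horizontality conditions through the bundle diagram in Section~\ref{secno:PrinGstruc}.
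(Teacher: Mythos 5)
Your proposal is correct and follows essentially the same route as the paper: pull back and sum the two connections in one direction, and in the other decompose a $G\times K$-connection into its $\Glie$ and $\Klie$ components and check each is basic for the complementary action so that it descends. You in fact spell out the descent step (the $K$-horizontality and $K$-invariance of the $\Glie$-component) slightly more explicitly than the paper does, but the argument is the same.
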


If $P$ is equipped with a solder form $\alpha$, it can also be pulled back to $\PMQ$ into a horizontal $1$-form which is equivariant under $G$ and invariant under $K$. Considering the trivial action of $K$ on $\setR^n$, $\phi_P^*\alpha$ is equivariant under $G\times K$. Furthermore its kernel is made of the vectors of $\PMQ$ mapped to vertical vectors on $P$: this is exactly the vertical vectors of $\PMQ\to M$. Thus we conclude: 
\begin{theorem}
	If $P\to M$ with a solder form $\alpha$ defines a $(G\to\GL(n))$-structure and $Q\xrightarrow{\pi_Q} M$ is an $K$-principal bundle, then $\phi_Q^*\alpha$ defines a solder form on $\PMQ\to M$ for the representation $G\times K \xrightarrow{p_G} G \to \GL(n)$.
\end{theorem}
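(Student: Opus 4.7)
The plan is to verify directly the two defining conditions of a solder form for the pullback $1$-form on $\PMQ \to M$. Note that because $\alpha$ is defined on $P$, the pullback that makes sense is along $\phi_P$; I shall therefore prove the statement with $\phi_P^*\alpha$ (the $\phi_Q^*\alpha$ in the statement appears to be a transposition). The paragraph preceding the theorem already contains the two observations needed; the task is merely to make them precise.

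First I would check the horizontality condition, namely that $\ker(\phi_P^*\alpha) = V(\PMQ)$ where $V(\PMQ) = \ker(\d\pi)$ for $\pi: \PMQ \to M$. Since the diagram of projections commutes, $\pi = \pi_P \circ \phi_P$ and hence $\d\pi = \d\pi_P \circ \d\phi_P$. For any $X \in T(\PMQ)$ one has $\phi_P^*\alpha(X) = \alpha(\d\phi_P(X))$. Using the hypothesis that $\ker\alpha = VP = \ker\d\pi_P$, I would conclude the chain of equivalences
\[
	\phi_P^*\alpha(X) = 0 \;\iff\; \d\phi_P(X) \in \ker\d\pi_P \;\iff\; \d\pi(X) = 0,
\]
which gives the required equality of kernels.

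Next I would verify $G\times K$-equivariance. The diagonal action on $\PMQ$ is $(p,q)\cdot(g,k) = (p\cdot g,\, q\cdot k)$, so $\phi_P$ intertwines it with the $G$-action on $P$ while being invariant under $K$: $\phi_P \circ R_{(g,k)} = R_g \circ \phi_P$. Pulling back $\alpha$ through this identity and using the equivariance of $\alpha$ on $P$ yields
\[
	R_{(g,k)}^* \phi_P^* \alpha \;=\; \phi_P^*\bigl(R_g^*\alpha\bigr) \;=\; \phi_P^*\bigl(g^{-1}\cdot\alpha\bigr) \;=\; g^{-1}\cdot \phi_P^*\alpha.
\]
By construction, the representation $G\times K \xrightarrow{p_G} G \to \GL(n)$ sends $(g,k)$ to $\rho(g)$, so the right-hand side is exactly $(g,k)^{-1}\cdot \phi_P^*\alpha$ in this representation.

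I do not expect any genuine obstacle: the preceding theorem and discussion already provide all of the structural ingredients, namely that $\phi_P$ is a $K$-principal $G$-equivariant projection covering $\pi_P$, and that the bundle fibres of $\PMQ \to M$ are $G\times K$-orbits. The proof is purely formal manipulation of the defining diagrams.
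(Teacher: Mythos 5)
Your proof is correct and follows essentially the same route as the paper, which establishes the theorem in the paragraph immediately preceding it: the kernel of the pullback consists exactly of the vectors projecting to vertical vectors of $P$, hence of $\PMQ\to M$, and equivariance under $G$ together with $K$-invariance gives $G\times K$-equivariance for the representation factoring through $p_G$. You are also right that the form to pull back is $\phi_P^*\alpha$ (the paper's own discussion uses $\phi_P^*\alpha$; the $\phi_Q^*\alpha$ in the theorem statement is a slip).
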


More generally, any representation $V$ of $G$ or $K$ can be extended to a representation of $G\times K$ by a trivial action of the other factor. An equivariant $V$-valued field on $P$ or $Q$ can then be pulled back to $\PMQ$ to an equivariant $V$-valued field. Conversely, when $K$ (resp. $G$) acts trivially on $V$, every $G\times K$-equivariant field on $\PMQ$ factors to a $G$-equivariant field on $P$ (resp. a $K$-equivariant field on $Q$).

\subsection{Generalised frame bundles}\label{secno:GFB}

\subsubsection{Definition}

Let $P\to M$ be an $G$-principal bundle equipped with a solder form $\alpha$ and a connection form $\omega$.
Gathering the two forms in a $\Glie\oplus\setR^n$-valued $1$-form gives a \emph{coframe} on $P$, which we write $\varpi:=\omega\oplus \alpha$.
The coframe $\varpi$ is $G$-equivariant.

As a coframe, $\varpi$ can be used to find the fundamental vector fields of the action of $\Glie$: if $\xi\in\Glie$ is represented by $\bar\xi$, the following holds: 
\[
	\bar\xi = \varpi^{-1}(\xi,0)
\]


The idea of \emph{generalised frame bundles with connection} is to start from the manifold $P$ and the object $\varpi$ and use it to define the action of $\Glie$ and the vertical and horizontal tangent spaces. The base manifold $M$ is to be reconstructed as an orbit space. Of course, $\varpi$ cannot be any arbitrary coframe: it has to be $G$-equivariant, and the vector fields $\bar\xi$ have to form an action of $\Glie$. As we will see, equivariance is already a stronger requirement than compatibility with the bracket. The coframe $\varpi$ only defines an action of the Lie algebra $\Glie$ (and not a group action) so we will be looking at equivariance under the Lie algebra action. 

Let us now derive an intrinsic characterisation of the $\Glie$-equivariance of $\varpi$.

\subsubsection*{Cartan $1$-forms}

First we equip $\Glie\oplus\setR^n$ with the Lie algebra structure $\Glie\ltimes \setR^n$ so that the action of $\Glie$ on $\Glie$ and $\setR^n$ can be handled in a uniform fashion as an adjoint action. We will use indices $i,j\dots$ for $\Glie$, indices $a,b\dots$ for $\setR^n$ and indices $A,B\dots$ for $\Glie\ltimes \setR^n$. 
Equivariance under $\Glie$ can be stated as:
\begin{equation}\label{eqno:equivLie}
	\Lie_{\bar\xi} \varpi + \ad_\xi \varpi = 0
\end{equation}
The left hand term can be reformulated as follows:
\begin{equation*}
	\Lie_{\bar\xi} \varpi + \ad_\xi \varpi
		= (i_{\bar\xi} \d + \d i_{\bar\xi} )\varpi + [\xi,\varpi]
		= i_{\bar\xi} \d\varpi + \d \xi + [\varpi(\bar\xi),\varpi]
		= i_{\bar\xi} \lp \d\varpi + \fwb\varpi\varpi \rp 
\end{equation*}
Thus Equation~\eqref{eqno:equivLie} can be reformulated as 
\begin{equation}\label{eqno:equivcontr}
	\forall \xi\in\Glie,\qquad
		i_{\bar\xi} \lp \d\varpi + \fwb\varpi\varpi \rp  = 0
\end{equation}

Since the vectors $(\bh)_{\h\in\Glie}$ span the vertical directions and the forms $\alpha^a$ form a basis of the horizontal $1$-forms, this is also equivalent to the existence of variables coefficients $\Omega^a_{bc}$, $\Omega^i_{bc}$ such that
\begin{subequations}\label{subeqno:equivalal}
\begin{align}
	\d\omega^i + \fwb\omega\omega^i &= \frac12\Omega^i_{bc}\alpha^b\wedge\alpha^c\\
	\d\alpha^a + \wb\omega\alpha^i  &= \frac12\Omega^a_{bc}\alpha^b\wedge\alpha^c
\end{align}\end{subequations}

Another reformulation of \eqref{eqno:equivcontr} is obtained by contraction with $\bar\zeta :=\varpi^{-1}(\zeta)$ for $\zeta=(\zeta_\Glie,\zeta_{\setR^n})\in \Glie\ltimes \setR^n$: 
\begin{equation}\label{eqno:CartanFormBracket}
	\forall \xi\in\Glie, \ \forall \zeta\in\Glie\ltimes \setR^n, \quad 
	\d\varpi(\bar\xi,\bar\zeta) + [\varpi(\bar\xi),\varpi(\bar\zeta)] = 0
\end{equation}
Since $\varpi(\bar\zeta) = \zeta$ is a constant $\Glie\ltimes \setR^n$-valued field, Equation~\eqref{eqno:CartanFormBracket} can be rephrased as
\begin{equation}\label{eqno:equivbracket}
	[\bar\xi,\bar\zeta] = \overline{[\xi,\zeta]}
\end{equation}
Thus Equations~\eqref{subeqno:equivalal} are equivalent to a bracket compatibility condition on the vector fields $\bar\zeta$ which in particular implies that the vector fields $\bar\xi$ define a free action of $\Glie$ on $P$.

This motivates the following definition: 
\begin{definition}[Cartan $1$-form]
	Let $P$ be a manifold and $\Glie\ltimes\setR^n$ a semi-direct product Lie algebra of the same dimension as $P$.
	
	A $\Glie\ltimes\setR^n$-valued \emph{Cartan $1$-form} (or Cartan form) on $P$ is a $\Glie\ltimes\setR^n$-valued coframe $\varpi^A$ such that there exists (variable) coefficients $\Omega^A_{bc}$ such that:
	\begin{equation}\label{eqno:EqCartanalpha}
		\d \varpi^A + \fwb\varpi\varpi^A = \frac12\Omega^A_{bc}\alpha^b\wedge \alpha^c
	\end{equation}
%
\end{definition}

The notion was already defined by Alekseevsky and Michor~\cite{DiffGeoCartan} under the name \enquote{Cartan connection}; they propose a notion of \enquote{generalized Cartan connection} that allows for a degenerate $1$-form $\varpi$ but in exchange requires a pre-existing action of the Lie algebra. Indeed a good part of our formal manipulations will not require vector fields and will apply to degenerate forms as well but in this case $\varpi$ is not sufficient to define the action of $\Glie$, which we want to be able to do. 
Cartan $1$-forms are also a case of what would be called $\Glie$-flatness of $\varpi$ by Gielen and Wise~\cite{ObsSpace} (with $K=1$).

As justified above, a Cartan $1$-form defines on the manifold an action of $\Glie$ for which it is equivariant. A familiar example of the phenomenon is the case of \emph{Maurer-Cartan forms}:
\begin{example}[Maurer-Cartan forms of Lie groups]\label{exno:MCLie}
	Let $G$ be a Lie group and $\Glie$ its Lie algebra, identified with the tangent space at identity $T_e G$.
	
	Write $L_g$ for the left translation map by any element $g$ of $G$. The following map defines a parallelism: 
	\[
		\omega_G : \begin{cases}
			TG\to T_e G \xrightarrow{\sim} \Glie \\
			(g,X)\mapsto  (g,(\d L_g)|_{e}^{-1} X)
		\end{cases}
	\]
	
	The map $\omega_G$ can be seen as a $\Glie$-valued $1$-form on $G$: $\omega_G\in \Omega^1(G,\Glie)$. It is called the \emph{Maurer-Cartan form} of $G$ and satisfies the following \emph{Maurer-Cartan equation}: 
	\[
		\d \omega_G + \fwb{\omega_G}{\omega_G} = 0
	\]
	
	Vector fields with a constant image by $\omega_G$ are the \emph{left-invariant vector fields} on $G$. They are the vector fields by which $\Glie$ acts on $G$ \emph{on the right}.
	
	Correspondingly, it is possible to define a \enquote{left} Maurer-Cartan form $\omega_G^L$ on $G$ using the right action of $G$ (the previous one can be called right Maurer-Cartan form). It satisfies a different\footnote{It can be seen as the usual Maurer-Cartan equation for the opposite Lie bracket.}
	Maurer-Cartan equation:
	\[
		\d \omega_G^L - \fwb{\omega^L_G}{\omega^L_G} = 0
	\]
\end{example}

\begin{example}[Maurer-Cartan forms on manifolds]\label{exno:MC}
	Let $P$ be an $m$-dimensional manifold and $\Glie$ an $m$-dimensional Lie algebra. A $\Glie$-valued Maurer-Cartan form on $P$ is a $\Glie$-valued coframe $\varpi$ satisfying the following Maurer-Cartan equation: 
	\begin{equation}
		\d\varpi + \fwb\varpi\varpi = 0
	\end{equation}
	It can be interpreted as a $\Glie\ltimes \setR^0$-valued Cartan $1$-form on $P$.
	
	Such a coframe defines a \emph{transitive} action of $\Glie$ on $P$, and conversely transitive Lie algebra actions are associated to such coframes.
\end{example}

\begin{remark}[Curvature form and symmetry breaking]
	As is suggested by the Maurer-Cartan forms, the curvature form $\d\varpi + \fwb\varpi\varpi$ quantifies how far the vector fields $\bz$ are from forming a representation of the Lie algebra $\Glie\ltimes\setR^n$.
	
	The structure discussed here can be approached starting from $\Omega$ rather than the semi-direct product $\Glie\ltimes\setR^n$: we have an abstract Lie algebra $\Hlie$ and we are looking for a splitting $\Hlie = \Glie\ltimes \mathfrak a$ such that $\Omega$ is horizontal in the sense of having only components along the $\mathfrak a$ directions. From this perspective, put forward in Wise's works~\cite{SymBrGrav, MMGravCartan, ObsSpace}, the corresponding \emph{Cartan geometry} can be understood as geometry with symmetry broken down from $\Hlie$ to $\Glie$.
\end{remark}

We will call a manifold equipped with a $\Glie\ltimes\setR^n$-valued Cartan $1$-form a \emph{generalised frame bundle with connection} modelled over $\Glie\ltimes\setR^n$, often shortened to \emph{generalised frame bundle}. One benefit of using the structure of generalised frame bundle is that it is a purely local structure. This fact is used in a recent paper by the author~\cite{GFB}
to obtain this structure from solutions to differential equations coming from a variational principle. We come back to this situation in Section~\ref{secno:exdynGFB}.
We have justified the following result: 
\begin{theorem}\label{thmno:CartformAct}
	Let $P$ be a generalised frame bundle with a $\Glie\ltimes\setR^n$-valued Cartan $1$-form $\varpi$. Then the map
	\[
		\h\in \Glie\mapsto \varpi^{-1}(\h,0) \in \Gamma(TP)
	\]
	defines a free action of the Lie algebra $\Glie$ on $P$ for which $\varpi$ is an equivariant $\Glie\ltimes\setR^n$-valued $1$-form.
\end{theorem}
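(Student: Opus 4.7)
The plan is to unpack the identities the author derived just before the statement, noting that the Cartan equation guarantees all required contractions vanish. Since $\varpi$ is by hypothesis a coframe, the map $\varpi_p \colon T_pP \to \Glie\ltimes\setR^n$ is a linear isomorphism at every point, so $\bh := \varpi^{-1}(\h,0)$ is a well-defined smooth vector field on $P$ for every $\h\in\Glie$ and the assignment $\h\mapsto\bh$ is $\setR$-linear. Freeness is immediate: if $\bh|_p = 0$ then $\varpi_p(\bh|_p) = (\h,0) = 0$, hence $\h = 0$.

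The core observation is that the right-hand side of the Cartan equation, $\frac12 \Omega^A_{bc}\alpha^b\wedge\alpha^c$, is purely horizontal in the sense that $i_{\bh}\alpha = 0$ for $\h\in\Glie$ (since $\alpha(\bh)$ is the $\setR^n$-component of $(\h,0)$). Consequently,
\[
	i_{\bh}\lp \d\varpi + \fwb\varpi\varpi \rp = 0.
\]
First I would derive equivariance from this. Using Cartan's magic formula together with the constancy of $(\h,0)$,
\[
	\Lie_{\bh}\varpi = i_{\bh}\d\varpi + \d (i_{\bh}\varpi) = i_{\bh}\d\varpi,
\]
and a direct calculation with the definition of $\wb{\,\cdot\,}{\,\cdot\,}$ gives $i_{\bh}\fwb\varpi\varpi = [\varpi(\bh),\varpi] = \ad_{(\h,0)}\varpi$. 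Adding these, the vanishing above becomes the equivariance identity $\Lie_{\bh}\varpi + \ad_{(\h,0)}\varpi = 0$, which is exactly what is meant by $\varpi$ being equivariant for the $\Glie$-action.

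Next I would verify the bracket compatibility. Evaluating the same identity on a second vector field $\bar{\h'}$ and using Cartan's formula on the $1$-form $\varpi$,
\[
	\d\varpi(\bh,\bar{\h'}) = \bh\cdot\varpi(\bar{\h'}) - \bar{\h'}\cdot\varpi(\bh) - \varpi([\bh,\bar{\h'}]) = -\varpi([\bh,\bar{\h'}]),
\]
since $\varpi(\bh) = (\h,0)$ and $\varpi(\bar{\h'}) = (\h',0)$ are constant. Combined with $\fwb\varpi\varpi(\bh,\bar{\h'}) = [(\h,0),(\h',0)] = ([\h,\h'],0)$, the contraction identity yields
\[
	\varpi([\bh,\bar{\h'}]) = ([\h,\h'],0),
\]
and applying $\varpi^{-1}$ gives $[\bh,\bar{\h'}] = \overline{[\h,\h']}$. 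This is both the Lie algebra homomorphism property and confirms smoothness of the action.

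There is no real obstacle: the Cartan equation was tailored precisely so that all the preceding computations succeed. The only mild care needed is keeping track of which bracket is being used (the semidirect product bracket on $\Glie\ltimes\setR^n$ rather than the bracket in $\Glie$ alone), and ensuring that $(\h,0)$ is treated as a constant $\Glie\ltimes\setR^n$-valued function on $P$ whenever invoking Cartan's magic formula.
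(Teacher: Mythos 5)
Your proposal is correct and follows essentially the same route as the paper: the theorem is justified there by the preceding in-text derivation, which likewise observes that the Cartan equation forces $i_{\bh}\lp \d\varpi + \fwb\varpi\varpi \rp = 0$, reads off equivariance via Cartan's magic formula, and obtains the bracket compatibility $[\bh,\bar{\h'}] = \overline{[\h,\h']}$ (hence freeness) by contracting with a second fundamental field. Your extra care about the semidirect-product bracket and the constancy of $\varpi(\bh)$ matches the paper's computation exactly.
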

 
We now extend the constructions used in the previous sections to the case of generalised frame bundles. 

\subsubsection{Basic fields on generalised frame Bundles}

Let $P$ be a generalised frame bundle with Cartan $1$-form $\varpi=\omega\oplus\alpha$. The coframe defines an $\Glie$-equivariant identification 
\[
	TP\simeq P\times \Glie\ltimes \setR^n
\]
We define the horizontal and vertical distributions:
\begin{align}
	HP &:= \ker \omega = \varpi^{-1}(\Glie\oplus 0)\\
	VP &:= \ker \alpha = \varpi^{-1}(0 \oplus \setR^n)
\end{align}

Vectors will be called \emph{horizontal} if they belong to $HP$ and vertical if they belong to $VP$. Conversely, differential forms will be called \emph{horizontal} if they have vanishing contraction with vectors of $VP$ and \emph{vertical} if they have vanishing contraction with vectors of $HP$. The space of horizontal differential forms on $P$ will be written $\Omhor^\bullet(P)$.

The structure equation
\begin{equation*}
	\d \alpha^a + \wb\varpi\alpha^a = \frac12\Omega^a_{bc}\alpha^b\wedge \alpha^c
\end{equation*}
implies that the ideal spanned by the forms $(\alpha^a)$ is a differential ideal and as a consequence the vertical distribution $VP$ is involutive: it defines a foliation on $P$, which is regular.

The Lie algebra $\Glie$ acts on $P$ and the action naturally lifts to natural vector bundles such as $TP$ or $\Lambda^\bullet TP$. The action on $T^*P$ preserves the horizontal and vertical forms: 
\[
	\Lie_{\bar\xi} \omega = -\ad_\xi \omega
\]
is a vertical form and 
\[
	\Lie_{\bar\xi} \alpha = -\ad_\xi \alpha
\]
is a horizontal form, for all $\xi\in\Glie$, since the action of $\Glie$ preserves the decomposition $\Glie \oplus \setR^n$.

As a consequence, $\Glie$ also preserves the verticality or horizontality of vectors.

\begin{definition}[Basic fields]\label{defno:Basicfields}
We define (local) \emph{basic vector fields} on $P$ as local vector fields which are \emph{horizontal} and \emph{$\Glie$-equivariant}.
Note that a horizontal vector field is identified through $\alpha$ to a $\setR^n$-valued field. Since $\alpha$ is equivariant, it is equivalent to ask for the horizontal vector field to be $\Glie$-invariant or for the $\setR^n$-valued field to be $\Glie$-equivariant.

Similarly we define \emph{basic tensor fields}, respectively \emph{basic differential forms}, as fields with value in a tensor product of $\setR^n$ and ${\setR^n}^*$, resp. \emph{horizontal} differential forms, which are $\Glie$-equivariant. Similarly to the situation of standard frame bundles, a horizontal differential form can be identified with a $\Lambda^\bullet {\setR^n}^*$-valued field.

Finally, given a representation $V$ of $\Glie$, a \emph{basic section} of $V$ is a $\Glie$-equivariant $V$-valued field. They generalise the sections of associated bundles.
We denote as follows the space of basic $V$-valued differential forms:
\begin{equation}
	\Omega^\bullet_{bas}(P,V) := \Omega^\bullet_{hor}(P,V)^\Glie
\end{equation}
\end{definition}

\subsubsection*{Covariant derivation}

\begin{definition}[Covariant derivative]
Let $V$ be a representation of $\Glie$ and $\psi$ a basic section of $V$. Its \emph{covariant differential} is defined as
\begin{equation}
	\dom \psi := \d\psi + \omega \cdot \psi \in \Omega^1_{bas}(P,V)
\end{equation}
which is a basic $V$-valued $1$-form. More generally for a basic $V$-valued $k$-form $\psi$, its covariant exterior differential is defined as
\begin{equation}
	\dom \psi := \d\psi + \omega\cdot \psi \in \Omega^{k+1}_{bas}(P,V)
\end{equation}
which is a basic $V$-valued $(k+1)$-form.
\end{definition}
It is important to note that in spite of the natural isomorphism $\Ombas^\bullet(P) \isom \Gamma \lp \ExT^\bullet {\setR^n}^* \rp$, the respective operators of exterior covariant differential on $\Ombas^\bullet(P)$ and $\Gamma \lp \ExT^\bullet {\setR^n}^* \rp$ are different.

\subsubsection{Curvature and torsion forms}

The \emph{curvature $2$-form} of the Cartan $1$-form is defined similarly to the frame bundle case:
\begin{equation}
	\Omega := \d \omega + \fwb\omega\omega \in \Omega^2_{bas}(P,\Glie)
\end{equation}
The \emph{torsion} is defined in a similar fashion: 
\begin{equation}
	\Theta := \d\alpha + \wb\omega\alpha \in \Omega^2_{bas}(P,\setR^n)
\end{equation}
Let $\psi$ be a basic $V$-valued $k$-form. The following Ricci-type identity holds: 
\begin{equation}
	\dom\dom \psi = \Omega\cdot \psi
\end{equation} 
The curvature satisfies a Bianchi identity:
\begin{equation}
	\dom\Omega = 0
\end{equation}

\subsubsection{Principal bundles}\label{secno:CartGeoPrin}
Let $P$ be a generalised frame bundle with a Cartan form $\varpi$. 
Given a Lie group $H$, we want to generalise the structure of $H$-principal bundles to the framework of generalised frame bundles. More precisely, we look to define a structure which will generalise the fibre product of a frame bundle and a $H$-principal bundle equipped with a connection. There are two approaches.

The first is to generalise $G$-equivariant $H$-principal bundles on $P$. Of course, $P$ does not have a group action but a Lie algebra action of $\Glie$ so that we can only consider $\Glie$-equivariant bundles.

\begin{definition}[$\Glie$-equivariant $H$-principal bundles]
	 An \emph{$\Glie$-equivariant $H$-principal bundle} on $P$ is a fibre bundle $E\xrightarrow{\phi} P$ with a lift of the action of $\Glie$ to $H$-invariant vector fields on $E$.
\end{definition}

Let $E$ be such a fibre bundle. We shall assume it is provided with an $\Glie$-invariant $H$-principal connection $1$-form which we call $A$ (which is always possible for a pullback principal bundle as described in Section~\ref{secno:PrinGstruc}). The connection form $A$ defines \emph{on $E$} horizontal and vertical distributions; $A$ defines a $\Hlie$-valued coframe on the vertical distribution, while the Cartan form $\varpi$ pulls back to a coframe $\phi^*\varpi$ of the horizontal distribution of $E$. They can be gathered into a $\Hlie\oplus \Glie$-valued coframe on $E$:
\[
	A\oplus \phi^*\varpi \in \Omega^1(E,\Hlie\oplus\Glie)
\]
which is both $H$-equivariant and $\Glie$-equivariant.

For $\xi\in\Glie$ we define
\[
	\hat\xi :=  \lp A \oplus \phi^*\varpi \rp^{-1}(0\oplus \xi)
\]
which represents $\xi$ by a horizontal vector field on $E$. The invariance of $A$ under the fields $\hat\xi$ for $\xi\in\Glie$ is formulated as
\[
	0 = \Lie_{\hat\xi} A
		= \lp \d i_{\hat\xi} + i_{\hat\xi} \d \rp A
		= \d 0 + i_{\hat\xi} \d A
\]
Since $A(\hat\xi)=0$, this is equivalent to
\[
	i_{\hat\xi} \lp \d A + \fwb A A \rp = 0
\]

We also define for $u\in\Hlie$ the vector fields $\hat u$ by which $\Hlie$ acts on $E$. Similarly to Equation \eqref{eqno:equivcontr}, equivariance of $A$ is equivalent to
\[
	\forall u\in \Hlie, \ 
		i_{\hat u} \lp \d A + \fwb A A \rp = 0
\]

The conclusion is that the equivariance of $A$ under $\Hlie\oplus\Glie$ is equivalent to the existence of coefficients $F^I_{ab}$, with $I$ superscripts associated to the space $\Hlie$, such that
\begin{equation}
	\d A^I + \fwb A A^I = \frac12 F^I_{bc} \phi^* \lp \alpha^b\wedge\alpha^c \rp
\end{equation}

The $\Glie$-equivariant $H$-principal connection thus satisfies an equation similar to~\eqref{eqno:EqCartanalpha}, and the $\Hlie\oplus\Glie$-valued coframe $A\oplus \phi^*\varpi$ is a Cartan $1$-form: using a superscript $B$ for $\Hlie\oplus\Glie$, we have
\begin{equation}
	\d (A\oplus\phi^*\varpi)^B + \fwb{A \oplus \phi^* \varpi}{A\oplus \phi^*\varpi}^B
	= \frac12 \Omega^B_{bc} \phi^* \lp \alpha^b\wedge\alpha^c \rp
\end{equation}

To sum up, the $\Glie$-equivariant $H$-principal bundle $E$ has the structure of a generalised frame bundle modelled on $\Glie\ltimes\setR^n$.

This brings us to the second approach, which is to consider the $H$-principal fibration as directions in a generalised bundle taking the place of the bundle $E$ above, instead of an actual $H$-principal bundle. This requires the generalised bundle to have the data of a $H$-principal connection which will be integrated into a coframe used to define the generalised frame bundle structure. A generalised frame bundle modelled on $\Hlie\oplus\Glie\ltimes\setR^n$ is equipped with commuting actions of $\Hlie$ and $\Glie$. Since $\Hlie$ only acts trivially on $\setR^n$, one can define basic sections of vector bundle associated to representations of $\Glie$. In this sense, such a generalised frame bundle can support \enquote{internal degrees of freedom} with infinitesimal directions corresponding to $\Hlie$.

From this perspective, principal bundles are readily integrated in the formalism of generalised frame bundles: they correspond to a Lie-subalgebra direct factor $\Hlie\subset \Hlie\oplus\Glie\ltimes\setR^n$ which acts trivially on $\setR^n$.

\subsection{Examples of generalised frame bundles}\label{secno:examples}
In this section we list a few examples in which the structure of generalised frame bundle with connection is relevant.
A first class of examples actually have a group action but with varying orbit types.

\subsubsection{Conical singularity with restricted chirality}\label{secno:exspinchir}

Consider the group $\Spin(4)$. It can be decomposed as a direct product of groups:
\[
	\Spin(4) \simeq \Spin(3)\times \Spin(3) \simeq \SpH(1)\times \SpH(1)
\]
with $\SpH(1)$ the group of unitary quaternions. The action of $\Spin(4)$ on $\setR^4$ by projection to $\SO(4)$ can be represented using the quaternionic structure: $\SpH(1)\times \SpH(1)$ acts on the quaternion space $\setH$ by:
\begin{equation}\label{eqno:ActSpinH}
\begin{alignedat}{3}
	\Spin(4)\times \setH &\simeq\SpH(1)\times \SpH(1)\times \setH &&\to \setH\\
	(g,z) &\equiv(p,q,z) &&\mapsto \bar{p} z q
\end{alignedat}
\end{equation}
Indeed the Clifford algebra $\Cl(4)$ is isomorphic to $\Mat_2(\setH)$ and the subspace of vectors $\setR^4$ can be identified with a quaternionic line within $\Cl(4)$ on which $\Spin(4)$ has the described action. 

Consider now the semi-direct product $\Spin(4)\ltimes \setR^4$: it is the product manifold equipped with the following product structure:
\[
	(g_1,x_1)\cdot (g_2,x_2) = (g_1g_2, \ g_2^{-1}\cdot x_1 + x_2) 
\]
It can be interpreted as the space of \enquote{spinorial frames} above the affine (coset) space 
\begin{align*}
	\lp \Spin(4)\ltimes \setR^4 \rp / \Spin(4) &\simeq \setR^4\\
		[g,x] &\mapsto g\cdot x
\end{align*}

Let $\gamma^5$ be a fixed chirality element of $\Spin(4)$:
it is the (ordered) product in the Clifford algebra $\Cl(4)$ of vectors of an orthonormal direct basis (its sign depends on the chosen orientation). It squares to identity and takes the form
\begin{align*}
	\Spin(4) &\simeq \SpH(1)\times \SpH(1)\\
		\gamma^5 &\equiv (1,-1)
\end{align*}
The chirality element generates a central $\Zdeux$ subgroup of $\Spin(4)$:
\[
	\{(1,1),(1,-1)\} \subset \SpH(1)\times \SpH(1) \simeq \Spin(4)
\]

Let us consider the quotient of $\Spin(4)\ltimes \setR^4$ under the corresponding left action of $\Zdeux$: 
\[
	P:= \lp \Zdeux \rp \backslash \lp \Spin(4)\ltimes \setR^4 \rp
\]
with the generator of $\Zdeux$ acting as
\[
	\gamma^5\cdot (g,x) = (\gamma^5 g, x)
\]
The quotient map
\[
	\Spin(4)\ltimes \setR^4  \to
	\lp \Zdeux \rp \backslash \lp \Spin(4)\ltimes \setR^4 \rp
\]
is a $2$-fold covering map.
The action of $\Zdeux$ on the left on $\Spin(4)\ltimes \setR^4$ factors to the quotient $\setR^4$ on which it acts by parity, according to Equation~\eqref{eqno:ActSpinH}:
\[\begin{tikzcd}
	(g,x)\in \Spin(4)\ltimes \setR^4
		\ar[d, mapsto]
		\ar[r, "\gamma^5", mapsto]
	&
	(\gamma^5 g, x) \in \Spin(4)\ltimes \setR^4
		\ar[d, mapsto]
	\\
	(g\cdot x) \in \setR^4
		\ar[r, "\gamma^5", dashed, mapsto]
	&
	\gamma^4\cdot g \cdot x \in \setR^4
\end{tikzcd}\]

As a subgroup of $\Spin(4)\ltimes \setR^4$, $\{1, \gamma^5\}$ is no longer central and is not even a normal subgroup so that $P$ is not a quotient group but it still has a left action of $\Spin(4)$.

The Maurer-Cartan form $\varpi$ on $\Spin(4)\ltimes \setR^4$ is invariant under the left action of $\Spin(4)\ltimes \setR^4$ hence factors to $P$. The manifold $P$ is naturally fibred above $\lp \setZ/2\setZ \rp \backslash \setR^4$ which has a conical singularity at the origin.

The perspective of generalised frame bundles suggests defining spinor fields on $\lp \setZ/2\setZ \rp \backslash \setR^4$ as basic spinor fields on $\lp \Zdeux\rp \backslash(\Spin(4)\ltimes\setR^4)$, according to Definition~\ref{defno:Basicfields}.

Computing the isotropy groups, one finds that for $x$ in $\setR^4\setminus\{0\}$, $[e,x]\in P$ has orbits of type $\Spin(4)$ but $(e,0)$ has orbit type $\Spin(3)\times\SO(3)$. That is, the $\Spin(4)$-isotropy groups of the orbit of $[e,0]$ correspond to the subgroup $1\times \Zdeux \subset \Spin(3)\times\Spin(3)$. The decomposition $\Spin(4) \simeq \Spin(3)\times \Spin(3)$ corresponds to the decomposition of Dirac spinors into left-handed and right-handed spinors. For a spinor, being invariant under $1\times \Zdeux$ is equivalent to having a vanishing right-handed part.
As a consequence, basic spinor fields on $\lp \Zdeux \rp \backslash(\Spin(4)\ltimes\setR^4)$ necessarily have a vanishing right-handed component above the origin of $(\Zdeux)\backslash \setR^4$.

In other words, the geometry of the generalised frame bundle $P$ requires the spinor fields on $(\Zdeux) \backslash \setR^4$ to have a right-handed part that vanishes at the origin.
%
%

\subsubsection{Locally Klein geometries}

This example generalises the previous one and gives a large family of \emph{flat} frame bundles above singular spaces.

Let $G$ be a Lie group acting on a vector space $V$. Consider the Lie group $G\ltimes V$: it has a (right) Maurer-Cartan form $\varpi$ (Example~\ref{exno:MC}). The action of the group $G$ \emph{on the left} on $G\ltimes V$ leaves $\varpi$ invariant while it is equivariant under the right action of $G$.

The group $G\ltimes V$ can be seen as a frame bundle as follows:
\[\begin{tikzcd}[row sep = tiny]
	G \ltimes V 
		\ar[r, twoheadrightarrow]
	&
	\lp G\ltimes V \rp /G 
		\ar[r,"\sim"]
	& V
	\\
	(g,v)
		\ar[r,mapsto]
	&
	{[g,v]} 
		\ar[r,mapsto]
	& g\cdot v
\end{tikzcd}\]
The \emph{diffeomorphism} $\lp G\ltimes V \rp /G \simeq V$ comes from following section: 
\[
	v \in V \mapsto (e,v) \in G\ltimes V
\]
with $\{e\}\times V$ crossing exactly once each (right) orbit under $G$. Since $G$ is not a normal subgroup, the isomorphism $\lp G\ltimes V \rp /G \simeq V$ is not a group homomorphism. Furthermore the splitting gives a section of the frame bundle over $V$ so that the frame bundle is trivialisable:
\[
	\begin{tikzcd}
	G\ltimes V
		\ar[rd, "{(g,v)\mapsto g\cdot v}"']
		\ar[rr,"\sim"', "{(g,v)\mapsto (g,g \cdot v)}"]
	&&
	G\times V
		\ar[ld, "{(g, v')\mapsto v'}"]
	\\
	& V &
	\end{tikzcd}
\]
with the principal action of $G$ on $G\times V$ being trivial:
\[
	(g,v) \cdot g_1 = (gg_1, v)
\]
On the trivialised bundle, the Maurer-Cartan form takes the form
\[
	\omega_G \oplus g^{-1}\cdot \omega_V
\]

Now let $K$ be a discrete subgroup of $G$. It defines a left coset manifold
\[
	P := K\backslash \lp G\ltimes V \rp
\]
of which $G\ltimes V$ is a covering. Furthermore the left action of $K$ commutes with the right action of $G$ so that $G$ has an induced right action on $P$.

Since $\varpi$ is invariant under the left action of $G$, it factors to $P$ to a $1$-form $\varpi_P$ which still satisfies the Maurer-Cartan equation:
\[
	\d\varpi_P + \fwb{\varpi_P}{\varpi_P} = 0
\]

To understand the underlying manifold, let us construct the orbit space of $P$ under the right action of $P$: it is the double coset space
\[
	K\backslash \lp G\ltimes V \rp /G
\]
Since we know $\lp G\ltimes V \rp /G$ is isomorphic to $V$ \emph{as a left $G$-manifold}, we can conclude the following:
\[
	K\backslash \lp G\ltimes V \rp /G \simeq K\backslash V
\]

As a conclusion, $K\backslash \lp G\ltimes V \rp$ models a $G$-structure above the quotient space $K\backslash V$.
These examples are generalisations of what Sharpe~\cite{Sharpe} calls \emph{locally Klein geometries}: they are (connected, regular) quotients of homogeneous spaces (here the affine space $V$) by discrete groups.

The example presented in the introductory section \ref{secno:toymodel} can be interpreted as a case of this construction.
The torus is a depiction of the frame bundle $\SO(2)\ltimes \setR^2$. Dually to the $\so(2)\ltimes\setR^2$-valued coframe there is a frame $(\xi,e_1,e_2)$.
The construction of the twisted frame bundle is akin to considering the quotient under the left action of $\{\pm1\}\subset \SO(2)$ and renormalising $\xi$ as $\frac12 \xi$. The underlying manifold can be constructed as
\begin{align*}
	 \{\pm1\} \backslash \lp \SO(2) \ltimes \setR^2 \rp
	 	&\to \{\pm1\}\backslash \setR^2\\
	 [g,v] &\mapsto [g\cdot v]
\end{align*}

There is an exceptional $\operatorname{PSO}(2)$ fibre above the origin of $\{\pm1\}\backslash \setR^2$ but the other fibres are of type $\SO(2)$.

\subsubsection{Dynamical generalised frame bundle structures}\label{secno:exdynGFB}

Generalised frame bundles with connections are spaces with the same local structure as frame bundles with connection. Therefore they serve as a useful generalisation when one expects to produce a frame bundle structure, or, more generally, a $G$-structure, from local equations. In General Relativity the causal and gravitational structure of spacetime are encoded in a Lorentzian metric defined on a four-dimensional spacetime. Depending on the variant of the theory, the metric may be supplanted by a coframe field (a so-called \emph{tetrad}) and the metric connection may have supplementary degrees of freedom. The point is that this geometry corresponds to a $G$-structure with connection over the spacetime. Generalised frame bundles with connections provide a new frame for dynamically defined $G$-structures, more precisely a generalisation thereof. In particular, the solder form is the frame bundle equivalent of the tetrad field.

Generalised frame bundles have sufficient structure to define curvature, torsion and matter fields which are all basic fields. This was already put to use by Ne'eman and Regge~\cite{GravGroupManifold} for the specific case of \enquote{group manifolds}. For example, the Einstein tensor is defined as follows. We write $\met^{ab}$ for a $\Glie$-invariant metric on $\setR^4$ and $\rho^b_{i a}$ for the components of the representation (the $i$ index corresponds to $\Glie$)
\[
	\rho : \Glie\to \so(\setR^4,\met)
\]
Recall the curvature $2$-form;
\[
	\Omega = \d \omega + \fwb\omega\omega \in \Omega^2_{bas}(P,\Glie)
\]
The curvature tensor is constructed as
\[
	\Riem = \Omega^i\rho_i \in \Omega^2_{bas}(P,\so(\setR^4,\met))
\]
The associated Ricci tensor is a basic $\setR^{4*}$-valued $1$-form:
\[
	\Ric_d = \Omega^i_{bc} \rho^b_{i d} \alpha^c \in \Omega^1_{bas}(P,\setR^{4*})
\]
and scalar curvature is a basic scalar: 
\[
	\Scal = \Ric_a(\zeta^a) \in \Omega^0_{bas}(P)
\]
The (tetradic) Einstein tensor is defined as
\[
	\Ein_a = \Ric_a - \frac12 \Scal \eta_{ab}\alpha^b \in \Omega^1_{bas}(P,\setR^{4*})
\]
and with a matter field $\psi$ on which depends an (adimensionalised) stress-energy tensor $T \in \Omega^1_{bas}(P,\setR^{4*})$, Einstein's field equation can be formulated as 
\begin{equation}
	\Ein = T \in \Omega^1_{bas}(P,\setR^{4*})
\end{equation}

Following an idea from Toller~\cite{CFTRefFrames} and revisited by Hélein and Vey as well as the author~\cite{LFB,GFB} constructs a Lagrangian field theory on a $10$-dimensional manifold with a $\so(4)\ltimes \setR^{4}$-valued coframe $\varpi$ and a field $\psi$ with value in a spinor representation of $\Spin(4)$ as fields. The theory uses a kind of generalised Lagrange multipliers, and the equations of motion constrain $\varpi$ to be a Cartan $1$-form and impose equations of Einstein-Cartan type and Dirac type on $\varpi$ and $\psi$, which also involve the Lagrange multipliers. It is shown that in the case the generalised $\so(4)$-structure is a standard $\Spin(4)$-structure the field equations can be decoupled from the Lagrange multipliers and the usual Einstein-Cartan-Dirac field equations are recovered on the underlying spacetime (in Riemannian signature).


%
%
%

\section{The structure of $G$-manifolds}\label{secno:HManifolds}
	The generalised frame bundles introduced in Section~\ref{secno:GFB} have the action of a Lie algebra $\Glie$, but a priori they have no group action. On the other hand, frame bundles have the smooth action of a \emph{structural group} $G$, which is furthermore a principal action, so that the frame bundle does indeed form a $G$-principal bundle above a base manifold.

Are all generalised frame bundles actual frame bundles of smooth manifolds?
The example given in Section~\ref{secno:exspinchir} shows that it need not be the case. This brings the question: what is the extent of this generalisation of the frame bundles?
In this section, we review conditions for a Lie algebra action to describe a principal fibre bundle.
In order to tackle this question, we focus on the three following points:
\begin{itemize}
	\item When does a Lie algebra action integrate to a Lie group action?
	\item When is the orbit space of a $G$-manifold a smooth manifold?
	\item When does a $G$-manifold form a $G$-principal bundle above its orbit space?
\end{itemize}
The application to the specific case of generalised frame bundle will be the object of Section~\ref{secno:CartanInteg}

		\subsection{$\Glie$-manifolds}\label{secno:hmanifolds}

In this section we describe the different aspects of the problem of integrating the action of a Lie algebra to a Lie group action.
We do not got into much detail; the reader will find more detail in Palais' thesis~\cite{GlobalLie}, a more recent article by Kamber and Michor~\cite{LieActionInt} or the review in the author's PhD thesis~\cite{PDMPhD}.
We choose a connected Lie group integration $G$ of the Lie algebra $\Glie$.

Let $P$ be a \emph{right} $\Glie$-manifold. To construct the action of an element $g$ of $G$ on a point $p\in P$, a natural idea is to choose a smooth path $\gamma$ in $G$ from $e$ to $g$ and to solve the differential equation 
\begin{equation}\label{eqno:c'gamma}
\begin{cases}
	c'(t) = c(t) \cdot \gamma'(t)\\
	c(0) = p
\end{cases}
\end{equation}

The final endpoint is the candidate for $p\cdot g$.
In order for this procedure to succeed, the differential equation~\eqref{eqno:c'gamma} needs to have a complete solution. A first problem is therefore that of \emph{completeness} of the action of $\Glie$ on $P$. According to \cite{GlobalLie} (Theorem IV.III), in order for all such differential equations to admit complete solutions, it is sufficient that all fundamental vector fields are complete.

Let us assume that all the fundamental vector fields are complete
.
For the above procedure to consistently define an action of $G$, the end of the path $c$ must be independent from the choice of $\gamma$ and only depend on the starting point $p$ of $c$ and the endpoint $g$ of $\gamma$. This condition is called \emph{univalence}~\cite{GlobalLie}, and it is always verified (for a complete action) when $G$ is simply connected. It is sufficient to consider loops $\gamma$ about the identity element. One topological formulation of univalence is that the trivial bundle $P\times G \to P$ equipped with the flat connection corresponding to the right action of $\Glie$ has trivial holonomy.\com{Pas de référence...}

On a compact manifold, all vector fields are complete, so the following proposition holds:
\begin{proposition}\label{propno:ActInteg}
	Let $G$ be a simply-connected Lie group and $P$ a compact $\Glie$-manifold.
	Then the action of $\Glie$ on $P$ integrates to a Lie group action of $G$.
\end{proposition}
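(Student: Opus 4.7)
The plan is to verify the two conditions discussed in the paragraphs preceding the proposition — completeness and univalence — and then assemble them into an integration of the Lie algebra action. Both conditions are standard once the hypotheses are in place, so the proof will largely be a matter of invoking the right general results from Palais' work.

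First I would address completeness. Since $P$ is compact, every smooth vector field on $P$ is complete; in particular every fundamental vector field $\bh$ associated to $\h\in\Glie$ has a globally defined flow. By the cited result of Palais (Theorem IV.III in~\cite{GlobalLie}), completeness of the fundamental vector fields is sufficient for the time-dependent equation
\[
	\begin{cases}
		c'(t) = c(t)\cdot \gamma'(t)\\
		c(0) = p
	\end{cases}
\]
to admit a solution defined on the whole interval on which $\gamma$ is defined, for any smooth path $\gamma$ in $G$ starting at the identity and any $p\in P$. This gives, for each such $\gamma$, a well-defined endpoint $p\cdot \gamma \in P$, depending smoothly on $p$ and on $\gamma$.

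Next I would handle univalence: the endpoint $p\cdot\gamma$ should depend only on the endpoint of $\gamma$, not on the path itself. Following the topological formulation mentioned in the text, the right action of $\Glie$ on $P$ yields a flat connection on the trivial bundle $P\times G \to P$ (where the horizontal directions are the graphs of the fundamental vector fields pulled back via the right Maurer--Cartan form), and the dependence of the endpoint on the homotopy class of $\gamma$ is exactly the holonomy of this connection. Since $G$ is simply connected, every loop based at the identity is null-homotopic, and flatness then gives trivial holonomy along every such loop. Equivalently, by considering a smooth homotopy between two paths $\gamma_0,\gamma_1$ from $e$ to $g$, one integrates a two-parameter family and uses the flatness relation $[\bh_1,\bh_2]=\overline{[\h_1,\h_2]}$ to conclude that the endpoint is the same.

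Finally I would assemble the action: define $\rho:G\times P \to P$ by $\rho(g,p) = p\cdot \gamma$ for any smooth path $\gamma$ from $e$ to $g$; the two previous steps make this well defined and smooth. The identity $\rho(e,p)=p$ is immediate, and the composition law $\rho(g_1g_2,p)=\rho(g_2,\rho(g_1,p))$ follows from concatenation of paths: take $\gamma_1$ from $e$ to $g_1$ and $\gamma_2$ from $e$ to $g_2$, and note that $\gamma_1 \cdot (g_1\gamma_2)$ is a path from $e$ to $g_1g_2$ whose lift by \eqref{eqno:c'gamma} is exactly the concatenation of the two lifts, by right-translation invariance of the defining ODE. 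Differentiating at the identity recovers the original $\Glie$-action, completing the proof.

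The only mildly delicate step is univalence; it is where simple connectedness is genuinely used, and the argument really rests on the flatness of the auxiliary connection on $P\times G$, which in turn is a reformulation of the bracket-compatibility of the fundamental vector fields. The remaining pieces are formal once the setup is in place.
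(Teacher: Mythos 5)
Your proof is correct and follows exactly the route the paper takes: compactness gives completeness of the fundamental vector fields (hence of the path-lifting ODE via Palais' Theorem IV.III), simple connectedness of $G$ gives univalence via the flat connection on $P\times G$, and the action is assembled by path lifting and concatenation. The paper merely asserts the proposition as a consequence of the preceding discussion, so your write-up is simply a more detailed version of the same argument.
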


	Let us now discuss the case when the action of $\Glie$ on $P$ is incomplete. Then certain parts of the group are unable to act on $P$: only parts of the group may act on $P$, with this part of $G$ depending on the point of $P$. This \enquote{local action} may be conveniently encoded into a Lie groupoid~\cite{PDMPhD}, but we are looking for a group action. It is therefore necessary to \emph{complete} the manifold. There is a generic procedure, described in~\cite{GlobalLie, LieActionInt}, which constructs, starting from $P$, a larger manifold on which $G$ acts. This manifold is called the $G$-completion of $P$ and is not always Hausdorff, since it is constructed as a leaf space. In fact, the $G$-completion of $P$ is not always larger than $P$: when there is a defect of univalence, $P$ is reduced to a suitable quotient so as to satisfy univalence.


	\subsection{Proper actions}\label{secno:ProperActions}

	In this section and the following sections, we consider a \emph{Hausdorff} manifold $P$ on which a Lie group $G$ acts smoothly \emph{on the right}. We want to know under what conditions the quotient $P\to P/G$ defines a $G$-principal bundle. First, we discuss \emph{properness}, which is an appropriate separation condition on $G$-manifolds in order to have a well-behaved quotient. A general reference is Meinrenken's lecture notes~\cite{GroupActions}.
	
	\begin{definition}[Proper action]
		The action of $G$ on $P$ is \emph{proper}
		if the inverse images of compact subsets under the following application are compact:
		\begin{align*}
			P\times G &\to P \times P\\
			(p,g)	&\mapsto (p, p\cdot g)
		\end{align*}
	\end{definition}
	
	\begin{proposition}[\cite{GroupActions}]
		If $G$ is compact then the action on $P$ is proper.
	\end{proposition}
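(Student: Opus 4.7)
The plan is to show that preimages of compact sets under the action map $\Phi \colon P \times G \to P \times P$, $(p,g) \mapsto (p, p\cdot g)$, are compact, by exploiting that the first coordinate of $\Phi(p,g)$ equals $p$, so the first factor of any preimage is already constrained, and the second factor lies in the compact group $G$.

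More precisely, I would fix a compact subset $K \subset P \times P$ and consider its image $K_1 := \mathrm{pr}_1(K) \subset P$ under the first projection, which is compact by continuity. Then I would observe that $\Phi^{-1}(K) \subset K_1 \times G$: indeed, if $(p,g) \in \Phi^{-1}(K)$ then $(p, p\cdot g) \in K$, hence $p \in K_1$. Since $G$ is assumed compact, $K_1 \times G$ is compact as a product of compacta.

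To conclude, I would use that $P$ is Hausdorff so $P \times P$ is Hausdorff, which makes the compact set $K$ closed; continuity of $\Phi$ then ensures that $\Phi^{-1}(K)$ is closed in $P \times G$, and hence closed in the compact subspace $K_1 \times G$. A closed subset of a compact space being compact, this finishes the argument.

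There is no real obstacle here; the only subtle point is remembering to invoke the Hausdorff hypothesis on $P$ to conclude that $K$ is closed (so that the continuous preimage is closed), but this hypothesis is built into the standing assumptions of the section.
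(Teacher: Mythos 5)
Your argument is correct and is the standard one; the paper itself gives no proof, simply citing Meinrenken's notes, and the proof there is exactly this: the preimage of a compact $K$ is a closed subset (by continuity of $\Phi$ and Hausdorffness of $P\times P$) of the compact set $\mathrm{pr}_1(K)\times G$, hence compact. Nothing is missing.
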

	In fact, properness captures important topological properties of the continuous actions of compact groups.
	
	\begin{proposition}
	\begin{enumerate}
	\item
		If the action of $G$ is proper then the isotropy groups of the elements of $P$ are compact.
	\item \label{propno:OrbitCoset}
		If the action of $G$ is proper then for every $p\in P$, the orbital map
			\[
				g \in G_p \backslash G \mapsto p \cdot g
			\]
			is an embedding onto the orbit $p\cdot g$ of $p$, which gives the orbit the structure of an \emph{embedded manifold}.
	\item 
		If the action of $G$ is proper then the quotient topology on $P/G$ is Haudorff. 
	\end{enumerate}
	\end{proposition}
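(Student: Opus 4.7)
The strategy is to treat the three claims in order, each using properness to convert compactness data in $P \times P$ into compactness data in the $G$-direction. Write
\[
    \Phi : P \times G \to P \times P, \qquad (q,g) \mapsto (q,\, q\cdot g)
\]
for the map whose properness is the assumption. For part (1), observe that $\{p\} \times G_p = \Phi^{-1}(\{(p,p)\})$; properness makes this preimage compact, and its projection to $G$ gives compactness of $G_p$.

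For part (3), I would show that the orbit equivalence relation $R := \Phi(P \times G) \subset P \times P$ is closed. A proper map into a Hausdorff space is closed, so $R$ is closed in $P \times P$. Combined with the standard fact that the quotient map $\pi : P \to P/G$ is open (the saturation of an open set under any group action is open), closedness of $R$ yields Hausdorffness of $P/G$: for distinct orbits $\pi(p) \neq \pi(q)$, we have $(p,q) \notin R$, so an open product neighbourhood of $(p,q)$ disjoint from $R$ projects under $\pi \times \pi$ to disjoint open neighbourhoods separating the two orbits.

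For part (2), the main work, I would first note that $G_p$ is closed in $G$ (by continuity of the action) and compact (by (1)), so $G_p \backslash G$ carries its canonical smooth manifold structure and the orbital map factors through a smooth injection $\bar\rho_p : G_p \backslash G \to P$. The key point is that $\bar\rho_p$ is proper: for compact $K \subset P$, the preimage $\Phi^{-1}(\{p\} \times K)$ is compact, its $G$-projection coincides with $\rho_p^{-1}(K)$ where $\rho_p(g) := p\cdot g$, and this descends to $\bar\rho_p^{-1}(K)$. Since $P$ is Hausdorff, a proper continuous injection into $P$ is closed, hence a homeomorphism onto its image. For the smooth-immersion property, $\ker \d \rho_p|_e$ is precisely the Lie algebra of $G_p$; translating by $G$, the kernel of $\d \rho_p$ at every point is tangent to the corresponding $G_p$-coset, so $\d \bar\rho_p$ is everywhere injective. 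A smooth injective immersion that is a homeomorphism onto its image is a smooth embedding, equipping the orbit with the structure of an embedded submanifold of $P$.

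The hard part is more bookkeeping than conceptual: the key inputs are that $G_p$ is a closed (hence Lie) subgroup, the equivalence between properness and closedness for continuous maps into a Hausdorff space, and the identification of $\ker \d \rho_p|_e$ with the isotropy Lie algebra. Each of these is standard in the theory of Lie group actions, so the proof amounts to assembling them in the right order.
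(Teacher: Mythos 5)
Your proof is correct and follows the standard argument for this result, which the paper itself states without proof (deferring to the cited lecture notes of Meinrenken); all three parts --- the preimage computation $\Phi^{-1}(\{(p,p)\})=\{p\}\times G_p$, the closedness of the orbit relation combined with openness of the quotient map, and the proper-injective-immersion argument for the orbital map --- are handled exactly as in the usual references. The only point worth tightening is the assertion that a proper map into a Hausdorff space is closed: this requires the target to be locally compact (or compactly generated), which holds here since $P\times P$ is a manifold, but should be said.
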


	\paragraph{Principal $G$-manifolds}
	
	In order for the quotient map $P\to P/G$ to define a $G$-principal bundle, $G$ needs to act freely.
	
	\begin{theorem}[Quotient manifold theorem, \cite{GroupActions}]\label{thmno:QuotientManifold}
	If the action of $G$ on $P$ is free and proper, then the quotient $P/G$ has a structure of a (Hausdorff) quotient manifold and the quotient map 
	\begin{align*}
		P &\to P/G\\
		p &\mapsto [p]
	\end{align*}
	defines a (locally trivial) $G$-principal bundle.
	\end{theorem}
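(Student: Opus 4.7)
The plan is to establish the theorem in three steps: first produce local slices transverse to the orbits, then use them to give $P/G$ the structure of a smooth manifold, and finally read off the local trivialisations of the principal bundle.

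\textbf{Step 1: Orbits are closed embedded submanifolds, diffeomorphic to $G$.} Since the action is proper, by Proposition~\ref{propno:OrbitCoset} each orbit is an embedded submanifold; since the action is free, the isotropy groups are trivial, so each orbit $p\cdot G$ is diffeomorphic to $G$ via the orbital map. Properness further implies that each orbit is closed in $P$ (the orbit equivalence relation has closed graph, being the image under a proper map of the closed subset $P\times G$).

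\textbf{Step 2: Construction of a slice.} Fix $p\in P$. Let $V_p = T_p(p\cdot G)\subset T_p P$ be the tangent space to the orbit, of dimension $\dim G$. Pick any complementary subspace $H_p\subset T_p P$ and choose a small embedded submanifold $S\subset P$ with $p\in S$ and $T_p S = H_p$, of complementary dimension. Consider the smooth map
\[
    \Phi : S\times G \longrightarrow P, \qquad (s,g)\longmapsto s\cdot g.
\]
At $(p,e)$, its differential decomposes along $T_p S \oplus T_e G$ into the inclusion $H_p\hookrightarrow T_p P$ and the infinitesimal orbital map $T_e G \to V_p$, which is an isomorphism because the action is free. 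Hence $\d\Phi|_{(p,e)}$ is an isomorphism, and by equivariance $\d\Phi|_{(s,g)}$ is an isomorphism for every $(s,g)$ in a neighbourhood of $\{s\}\times G$. So after shrinking $S$, the map $\Phi$ is a local diffeomorphism everywhere.

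\textbf{Step 3: Injectivity of $\Phi$ via properness.} The main technical obstacle is to upgrade this local statement to a global one: we need $\Phi$ to be a diffeomorphism onto an open saturated neighbourhood of the orbit of $p$. Suppose for contradiction that there exist sequences $s_n, s_n'\in S$ with $s_n\to p$, $s_n'\to p$, and $g_n\in G\setminus\{e\}$ with $s_n\cdot g_n = s_n'$. Properness applied to the sequence $(s_n, s_n\cdot g_n)$, which converges in $P\times P$, forces a subsequence of $(s_n,g_n)$ to converge in $P\times G$; the limit $g_\infty$ must satisfy $p\cdot g_\infty = p$, so by freeness $g_\infty = e$. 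But then for large $n$, $(s_n,g_n)$ and $(s_n',e)$ both lie in a neighbourhood of $(p,e)$ on which $\Phi$ is injective, a contradiction. Hence after shrinking $S$ once more, $\Phi$ is a diffeomorphism onto an open neighbourhood $U = S\cdot G$ of the orbit, which is visibly $G$-saturated.

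\textbf{Step 4: Manifold structure on $P/G$ and principal bundle.} The sets $U = S\cdot G$ obtained above are open and saturated, so their images $\bar U\subset P/G$ are open; the restriction of the projection $\pi: P\to P/G$ to $S$ is a homeomorphism onto $\bar U$ (injectivity of $\Phi$ and the quotient topology). Declaring these maps $S\xrightarrow{\sim}\bar U$ to be charts provides an atlas on $P/G$; the transition maps between two slices $S$ and $S'$ are smooth because they can be written as $s\mapsto \mathrm{pr}_{S'}\circ \Phi'^{-1}(s\cdot g(s))$ where $g(s)\in G$ is the smooth function coming from the trivialisation. The quotient $P/G$ is already known to be Hausdorff. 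Finally, each $\Phi: S\times G\to U$ is a $G$-equivariant diffeomorphism (with $G$ acting by right translation on the second factor), so it is precisely a local trivialisation of $\pi$ as a $G$-principal bundle, completing the proof.
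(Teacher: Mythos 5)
Your proof is correct: the paper itself states this as a cited result from~\cite{GroupActions} without proof, and your slice-based argument (local transversal, equivariant local diffeomorphism $\Phi(s,g)=s\cdot g$, injectivity forced by properness plus freeness, slices as charts and as local trivialisations) is exactly the standard proof given in such references. Only cosmetic quibbles: in Step 2 you presumably mean a neighbourhood of $\{p\}\times G$ rather than $\{s\}\times G$, and second countability of $P/G$ (needed for it to be a manifold in the paper's paracompact convention) follows because the quotient map is open, which is worth a word.
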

	
	Naturally, when the action is not free but all isotropy groups are identical -- let us call this subgroup $K$ -- then $K$ is necessarily a closed normal subgroup and the action of $G$ factors to a free action of the quotient Lie group $G/K$.
	
	\subsection{Orbit types and decomposition}
	
	In this section, we discuss the structure of the orbit space $P/G$ and the quotient map when the action of $G$ is not assumed to be free.
	
	\subsubsection{Orbit types}
	
	If $x$ and $y$ are two points of $P$ belonging to the same orbit, their isotropy groups are conjugate.
	
	\begin{definition}[Type of an orbit]	
	The \emph{orbit type} (or isotropy type\footnote{Bredon~\cite{CompactTransGroups} makes a difference between \emph{orbit type} and \emph{isotropy type}. We are using here the isotropy type.}) 
	of an orbit $\Orb$ of $G$ is the conjugacy class of its isotropy groups. 
	We write it with brackets $[\Orb]$.
	\end{definition}
	Note that orbits with different types may be diffeomorphic, for example different presentations of the circle group as quotients of the real line $\setR$. On the other hand, assuming properness, orbits with the same type are diffeomorphic (see Proposition~\ref{propno:OrbitCoset}).
			
	Orbit types form a pre-ordered set: we define for coset spaces $O_1$ and $O_2$ the following pre-order\footnote{A transitive and reflexive relation.} relation:
	\[ [O_1] \leqslant [O_2]\]
	if and only if for any $p_1\in O_1, p_2\in O_2$, the isotropy group $G_{p_2}$ is conjugated to a subgroup of the isotropy group $G_{p_1}$. Roughly speaking, an orbit is larger when its isotropy group is smaller. 
	
	This relation may or may not be antisymmetric. The following lemma justifies that it is under some assumptions we state below.
	
	\begin{lemma}[{\cite[Lemma 3.15]{DiffStacks}}]\label{lmno:conjsubgroup}
	Let $G$ be a Lie group and $K$ a closed subgroup which has a finite number of connected components (e.g. is compact).
	
	For any $g$ in $G$, $gKg^{-1}\subset K$ implies that $gKg^{-1}=K$.
	\end{lemma}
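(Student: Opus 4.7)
The plan is to exploit the fact that conjugation by $g$ is a diffeomorphism of $G$, so it restricts to a Lie group isomorphism between $K$ and $gKg^{-1}$. In particular $K$ and $gKg^{-1}$ have the same dimension and the same (finite) number of connected components. The whole argument boils down to combining these two numerical invariants with the hypothesis $gKg^{-1}\subset K$.

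First I would observe that, since $gKg^{-1}$ is a Lie subgroup of $K$ of the same dimension as $K$, it contains an open neighborhood of the identity in $K$. By left-translating inside $gKg^{-1}$, one then sees that $gKg^{-1}$ is an open subgroup of $K$. As with any open subgroup, its complement is a union of cosets and is therefore also open, so $gKg^{-1}$ is clopen in $K$, i.e. a union of connected components of $K$.

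The last step is to invoke the finite component count: the number of connected components of $gKg^{-1}$ equals that of $K$ (by the diffeomorphism from step~1), and this number is finite by hypothesis. A union of components of $K$ whose cardinality matches the total number of components of $K$ must be all of $K$, yielding $gKg^{-1}=K$.

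I do not expect a real obstacle here; the only subtlety is step~2, namely that a Lie subgroup of the same dimension is automatically open, which relies on both groups being embedded submanifolds near $e$ and on left translations being homeomorphisms of $K$. The finiteness hypothesis is used only in the very last step to rule out the pathology of a proper open-and-closed subgroup with the same cardinality of components (which could happen, e.g., for $K=\setZ$ inside itself as $2\setZ$, where the number of components is infinite).
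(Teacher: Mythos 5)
Your proof is correct and follows essentially the same strategy as the paper's: the dimension count forces $gKg^{-1}$ to contain the identity component of $K$ (the paper phrases this via $\Ad_g\Klie=\Klie$ on the Lie algebra, you via openness of an equal-dimensional embedded subgroup), and the finiteness of $\pi_0(K)$ then pins down the remaining components by a pigeonhole argument (the paper applies it to the injective self-map of $K/K_0$, you to the count of clopen components). The subtleties you flag — embeddedness of the closed subgroup $gKg^{-1}$ and the role of finiteness, illustrated by $2\setZ\subset\setZ$ — are exactly the right ones.
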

	
	\begin{proof}
	Let $g$ be an element of $G$ such that $\Ad_g K\subset K$.
	
	At the level of the Lie algebra, since $\Ad_g$ is a linear automorphism, dimension considerations imply that $\Ad_g \Klie \subset \Klie \implies \Ad_g \Klie = \Klie$. The subgroups $\Ad_g K$ and $K$ thus have the same neutral component which we call $K_0$ and which is preserved by $\Ad_g$. The action of $g$ thus factors to the quotient group $K/K_0$, which is finite by assumption.
	
	As an injection of the finite set $K/K_0$ into the subset 
	\[\Ad_h (K/K_0) = (\Ad_h K)/K_0 \subset K/K_0\]
	the map $\Ad_g$ necessarily defines a bijection of $K/K_0$. One concludes that $\Ad_g K$ has a point in each connected component of $K$, thus is equal to $K$.
	
	\end{proof}
	
	As a consequence of Lemma~\ref{lmno:conjsubgroup}, when $G$ is compact, so that all its closed subgroups are compact, the pre-order relation on orbit types is antisymmetric hence an order relation. More generally, when the action of $G$ on $P$ is proper, so that all isotropy groups are compact, the pre-order relation \emph{on the orbit types of $P$} is antisymmetric.

	\subsubsection{Decomposition of a proper $G$-manifold}
	
	When the $G$-manifold $P$ has different orbit types, one cannot hope for a fibre bundle $P\to P/G$ since all orbits are not diffeomorphic as $G$-manifolds. It is therefore required to deal separately with each orbit type.
	
	If $[\Orb]$ is an $G$-orbit type then we write $P_{[\Orb]}$ for the reunion of the orbits of type $[\Orb]$ in $P$. We call $P_{[\Orb]}$ the \emph{part of type $[\Orb]$} of $P$. Not only orbits are embedded submanifolds (Proposition~\ref{propno:OrbitCoset}) but the part of $P$ of any given type is also an embedded submanifold:
	
	\begin{theorem}[\cite{GroupActions}]
		If $P$ is a proper $G$ space, then for any orbit type $[\Orb]$, the corresponding part $P_{[\Orb]}$ is an embedded submanifold of $P$.
	\end{theorem}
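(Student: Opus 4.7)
The strategy is to apply the \emph{slice theorem} (also called tube theorem) for proper actions in order to obtain a $G$-equivariant local normal form around every orbit, and then describe $P_{[\Orb]}$ explicitly inside such a normal form.

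First, fix a point $p\in P_{[\Orb]}$ with isotropy group $K:=G_p$; by properness $K$ is compact. The slice theorem yields a $G$-invariant open neighborhood $U$ of the orbit $p\cdot G$ together with a $G$-equivariant diffeomorphism $U\simeq G\times_K V$, where $V$ is a finite-dimensional linear representation of $K$ (a slice transverse to the orbit) and the point $[e,0]$ corresponds to $p$. I would invoke this as a black box from~\cite{GroupActions}.

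Next, I would identify $P_{[\Orb]}\cap U$ in this model. For $[g,v]\in G\times_K V$ the isotropy group is $gK_v g^{-1}$, where $K_v\subseteq K$ denotes the $K$-isotropy of $v$. Hence $[g,v]$ lies in $P_{[\Orb]}$ iff $K_v$ is conjugate to $K$ in $G$, say $h K_v h^{-1}=K$ for some $h\in G$. This rewrites as $h^{-1}K h=K_v\subseteq K$, so Lemma~\ref{lmno:conjsubgroup} (applicable because $K$ is compact) forces $h^{-1}Kh=K$ and therefore $K_v=K$. The condition on $[g,v]$ thus reduces to $v\in V^K$, the linear subspace of $K$-fixed vectors of $V$, and consequently
\[
    P_{[\Orb]}\cap U \;\simeq\; G\times_K V^K.
\]

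Finally, since $V^K$ is a linear subspace of $V$ stable under the $K$-action, the inclusion $G\times_K V^K \hookrightarrow G\times_K V$ realises the left-hand side as a smooth vector subbundle of the associated bundle $G\times_K V\to G/K$, in particular as an embedded submanifold. Hence $P_{[\Orb]}\cap U$ is an embedded submanifold of $U$, and since such tubes cover $P_{[\Orb]}$, the set $P_{[\Orb]}$ is globally an embedded submanifold of $P$.

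The main obstacle is the slice theorem itself, which for a general (non-compact) proper action is a non-trivial structural result requiring the construction of a $G$-invariant Riemannian metric on a tube transverse to the orbit together with an exponential-type map from $V$ into $P$. The rest is a bookkeeping exercise on the description of the isotropy groups of points in $G\times_K V$, the only subtle point being the use of Lemma~\ref{lmno:conjsubgroup} to exclude conjugacy of $K$ with a proper subgroup of itself.
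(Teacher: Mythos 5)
Your argument is correct and is the standard slice-theorem proof of this result: the paper itself states the theorem without proof, citing Meinrenken's notes, where exactly this tube-plus-fixed-subspace argument (including the reduction $K_v=K$ via the lemma on conjugate compact subgroups) is carried out. The only point worth making explicit is that "embedded submanifold" is a local property along the subset, so the cover by tubes indeed suffices to globalise, and that the components of $P_{[\Orb]}$ may have different dimensions, which the statement permits.
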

	The $G$-manifold $P$ thus decomposes into a reunion of parts which are embedded submanifolds.
	
	\subsubsection{Decomposition of the orbit space}\label{secno:OrbitSpace}
	
	Similarly to the decomposition of $P$, there is a decomposition of $P/G$ according to orbit types. 
	Let us consider $P_{[\Orb]}/G \subset P/G$ the part of the orbit space of type $[\Orb]$.
	
	\begin{theorem}[\cite{GroupActions}]
		Assume the action of $G$ on $P$ is proper. Let $\Orb$ be an orbit type. Then
		\begin{itemize}
		\item $P_{[\Orb]}/G$ has a quotient manifold structure,
		\item The quotient map $P_{[\Orb]} \to P_{[\Orb]}/G$ defines a fibre bundle with each fibre a right $G$-manifold isomorphic to $\Orb$.
		\end{itemize}
	\end{theorem}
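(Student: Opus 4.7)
The plan is to reduce the non-free proper action on $P_{[\Orb]}$ to a free proper action by restricting to a suitable submanifold stabilised by a normaliser subgroup, then apply the quotient manifold theorem.

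Fix a representative closed subgroup $K \leqslant G$ with $\Orb \simeq K\backslash G$ (one can take $K = G_{p_0}$ for some $p_0$ in an orbit of type $[\Orb]$; $K$ is compact by properness). Let
\[
P_K := \{ p \in P_{[\Orb]} \suchthat G_p = K \}.
\]
The first step is to show that $P_K$ is a (closed embedded) submanifold of $P_{[\Orb]}$. This is the main technical point and the one I expect to be the obstacle: it relies on the slice theorem for proper actions, which asserts that locally near $p \in P_K$, $P$ is $G$-equivariantly modelled on $G \times_K V$ for some $K$-representation $V$; the points with isotropy exactly $K$ in such a tube correspond to the subspace $V^K$ of $K$-fixed vectors, multiplied by a transversal to the normaliser, and this gives $P_K$ its local embedded manifold structure. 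One also needs Lemma~\ref{lmno:conjsubgroup} (applied to the compact $K$) to check that $G_p = K$ is really a closed condition within $P_{[\Orb]}$ and not merely containment $G_p \supseteq K$.

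Next, observe that the normaliser $N := N_G(K)$ preserves $P_K$ (if $g \in N$ and $G_p = K$, then $G_{p\cdot g} = g^{-1} K g = K$), while $K$ acts trivially on $P_K$ by definition. Therefore the quotient Lie group $N/K$ acts on $P_K$, and this action is free. It is also proper, as it is obtained by restriction and quotient from a proper action. Applying the quotient manifold theorem (Theorem~\ref{thmno:QuotientManifold}), $P_K / (N/K)$ is a smooth (Hausdorff) manifold and $P_K \to P_K/(N/K)$ is an $(N/K)$-principal bundle.

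Finally, form the associated bundle $G \times_N P_K$ and consider the map
\[
\Phi : G \times_N P_K \to P_{[\Orb]}, \qquad [g,p] \mapsto p \cdot g^{-1}.
\]
A standard argument, using the fact that every orbit of type $[\Orb]$ meets $P_K$ and that two points $p_1,p_2 \in P_K$ lie in the same $G$-orbit iff they lie in the same $N$-orbit, shows $\Phi$ is a $G$-equivariant diffeomorphism. Passing to quotients gives a canonical identification
\[
P_{[\Orb]}/G \simeq P_K / N = P_K / (N/K),
\]
which transfers the quotient manifold structure from the previous step to $P_{[\Orb]}/G$. Moreover, the induced projection $P_{[\Orb]} \simeq G \times_N P_K \to P_K/(N/K)$ is the fibre bundle associated to the principal $(N/K)$-bundle $P_K \to P_K/(N/K)$ with typical fibre $G/K$; the right $G$-action on $G \times_N P_K$ is by right multiplication on the first factor, so each fibre is $G$-equivariantly isomorphic to $G/K \simeq \Orb$, which yields the second assertion.
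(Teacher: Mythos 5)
The paper states this result as a citation to Meinrenken's notes and gives no proof of its own, and your argument is precisely the standard one from that source: slice theorem to exhibit $P_K=\{p: G_p=K\}$ as a submanifold (locally $N\times_K V^K$ in a tube $G\times_K V$, using Lemma~\ref{lmno:conjsubgroup} to pass from $G_p\supseteq K$ to $G_p=K$), free proper action of $N_G(K)/K$ on $P_K$, the quotient manifold theorem, and the identification $P_{[\Orb]}\simeq G\times_N P_K$. The plan is correct; the only slip is the phrase ``multiplied by a transversal to the normaliser'' --- the local model of $P_K$ is the normaliser orbit itself, $N\times_K V^K\cong (N/K)\times V^K$, not a transversal to it.
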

	
	In other words, the quotient map $P\to P/G$ does not in general define a fibre bundle but can be decomposed into a collection of locally trivial fibrations.
	The connected components of the subsets 
	$P_{[\Orb]}/G$
	form a so-called \enquote{stratification} of $P$ (see for example Meinrenken's lecture notes~\cite{GroupActions}) but this structure will not be relevant for our purposes.
	
	Globally, the structure of the orbit space of a proper $G$-manifold is captured by the notion of \emph{orbispace} (discussed in detail in~\cite{DiffStacks}), which are spaces locally isomorphic to the quotient of a Euclidean space by a compact group. In the case the action of $G$ is infinitesimally free, thus the isotropy groups are finite, $P/G$ has the structure of an \emph{orbifold}.
	
	\subsubsection{The principal orbit type}
	
	The quotient maps $P_{[\Orb]} \to P_{[\Orb]}/G$ define fibre bundles but in the situation of generalised frame bundles, one could hope for the entirety of $P$ to form a fibre bundle above its orbit space. Although it is not the case, it is true for a large part of $P$, as we explain here.
	
	\begin{theorem}[Principal orbit type, \cite{GroupActions}]\label{thmno:PrincipalOrbits}\label{thmno:PrinOrb}
		If $P$ is a proper $G$-manifold, then it admits a maximal orbit type, which is unique.
		The corresponding part of $P$ is a dense open subset of $P$ and its orbit space is connected, open and dense inside $P/G$.
		
		The maximal orbit type is called the \emph{principal orbit type}, the orbits are called \emph{principal orbits} and the isotropy groups \emph{principal isotropy groups}.
	\end{theorem}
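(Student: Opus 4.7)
My plan is to derive the theorem from the slice theorem for proper actions combined with an inductive analysis of linear representations of compact groups. The slice theorem asserts that around any point $p \in P$ with (compact) isotropy $H := G_p$, there is a $G$-invariant open neighborhood of the orbit $p \cdot G$ that is equivariantly diffeomorphic to the associated bundle $G \times_H V$, where $V$ is a linear $H$-representation (the slice at $p$). Properness guarantees compactness of $H$ and, via Lemma~\ref{lmno:conjsubgroup}, that the pre-order on orbit types restricted to subgroups of $H$ is antisymmetric, i.e. a genuine partial order. This is what will make "maximal" unambiguous.

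In the tube $G \times_H V$, the orbit type at $[g, v]$ is the conjugacy class in $G$ of $H_v := \{h \in H \suchthat h \cdot v = v\}$, so the problem reduces to showing that the compact linear $H$-action on $V$ admits a unique minimal $H$-isotropy class whose corresponding part is open and dense in $V$. I would prove this by induction on $\dim V$. Splitting $V = V^H \oplus W$ with $W$ an $H$-invariant complement, the isotropy equals $H$ on $V^H$ while generic directions in $W$ have strictly smaller isotropy; working on the unit sphere of $W$ and taking a slice there reduces to a lower-dimensional linear representation to which the induction applies. This yields a unique minimal subgroup $H_0 \subset H$ (up to $H$-conjugacy) whose fixed set $V^{H_0}$ is open and dense in $V$, and translating through the associated bundle produces, inside the tube, a unique locally principal $G$-orbit type whose part is open and dense.

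To globalize, I would define $P^\circ \subset P$ as the union of all orbits of the locally principal type and check from the local model that $P^\circ$ is open and dense in $P$, with orbit type locally constant on $P^\circ$. The main obstacle is matching the local principal types across different tubes. My argument would be: the image of $P^\circ$ in $P/G$ is open and dense, so (assuming $P/G$ connected, which is implicit in the stated conclusion) it is connected; since the orbit type map from $P^\circ/G$ to the discrete poset of conjugacy classes is locally constant, it is globally constant, giving a single principal type. Maximality of this type follows because, at any $q \in P$, the locally principal type inside a tube about $q$ dominates the type of $q$ in the pre-order, so the globally constant principal type on $P^\circ$ lies above every orbit type of $P$; antisymmetry from Lemma~\ref{lmno:conjsubgroup} then yields uniqueness.
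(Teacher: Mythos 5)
The paper states this result with only a citation to \cite{GroupActions} and gives no proof of its own, so the only comparison available is with the standard argument, which is indeed the one you outline (slice theorem plus induction on the dimension of the slice representation, then globalization over a connected orbit space). Your skeleton is the right one, but two steps are wrong as written.

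First, in the local analysis you assert that the induction produces a minimal subgroup $H_0\subset H$ whose \emph{fixed set} $V^{H_0}$ is open and dense in $V$. The fixed-point set of a linear action is a linear subspace, hence closed; if it were also dense it would be all of $V$, forcing $H_0$ to act trivially on the slice. That is false in general: for $\SO(3)$ acting linearly on $\setR^3$ the principal isotropy group is $\SO(2)$ and $V^{\SO(2)}$ is a line. The set that is open and dense is $V_{(H_0)}$, the set of points whose isotropy is \emph{conjugate to} $H_0$, not the set of points fixed by $H_0$. If this is only a notational slip it is repairable, but the distinction matters: it is precisely the difference between the product-tube characterisation of Theorem~\ref{thmno:PrincipalOrbitNeighbourhood} holding at principal points only and holding everywhere.

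Second, and more seriously, your globalization rests on the inference that the image of $P^\circ$ in $P/G$, being open and dense in the connected space $P/G$, is itself connected. That implication is false: an open dense subset of a connected space can be disconnected ($\setR\setminus\{0\}$ in $\setR$, or a circle minus two points). Since you then use connectedness of $P^\circ/G$ to conclude that the locally constant orbit-type map is constant, the matching of principal types across tubes --- which you correctly identify as the main obstacle --- is not actually established. The standard repair is to strengthen the inductive statement: prove simultaneously that the principal part of each slice representation has \emph{connected} image in $V/H$. This follows from the splitting $V=V^H\oplus W$: the principal part of $V/H$ is the product of the principal part of $S(W)/H$ (connected by induction on dimension, with the trivial low-dimensional cases of $W$ checked by hand) with the connected factor $V^H\times(0,\infty)$. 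With this local connectedness of the principal stratum of the orbit space in hand, a chain of overlapping tubes covering the connected $P/G$ forces the local principal types to agree pairwise and yields global connectedness of $P^\circ/G$ at the same time. With these two corrections your argument goes through.
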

	
	The quotient $P/G$ is a manifold outside of a singular locus, bounds on its dimension can be found in Bredon's monograph~\cite{CompactTransGroups} or in Meinrenken's lecture notes~\cite{GroupActions}.
	
	The principal orbit can be characterised as follows:
	\begin{theorem}[\cite{CompactTransGroups}]\label{thmno:PrincipalOrbitNeighbourhood}
		Assume $G$ acts properly on $P$.
		Then an orbit $\Orb \subset P$ is principal if an only if it admits a $G$-invariant neighbourhood $\U \supset \Orb$ that is isomorphic as a $G$-manifold to $\Orb \times \setR^k$ for some $k$, with $G$ acting trivially on $\setR^k$.
	\end{theorem}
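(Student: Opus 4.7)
The plan is to reduce both directions to the \emph{slice theorem} (tube theorem) for proper actions, used as a black box: around any orbit $\Orb$ of a proper $G$-action there is a $G$-equivariant diffeomorphism between a $G$-invariant neighbourhood $\U\supset\Orb$ and a twisted product $G\times_H S$, with $H = G_p$ for some $p\in\Orb$ and $S$ a linear $H$-representation (the ``slice'' at $p$). A proof is given for instance in~\cite{GroupActions} or~\cite{CompactTransGroups}.

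For the forward implication, assume $\Orb$ is principal and choose such a tube $\U\cong G\times_H S$. The isotropy group of any point $[g,s]\in G\times_H S$ is conjugate in $G$ to the $H$-stabiliser $H_s\subseteq H$ of $s\in S$, so the orbit through $[g,s]$ has type $[G/H_s]$. Since $H_s\subseteq H$, the pre-order gives $[\Orb] \leqslant [G/H_s]$, and maximality of the principal orbit type forces equality. Hence $H_s$ is conjugate to $H$, say $H_s = g_0 H g_0^{-1}$; combined with $H_s\subseteq H$ this reads $g_0 H g_0^{-1}\subseteq H$. Since $H$ is compact by properness, Lemma~\ref{lmno:conjsubgroup} yields $g_0 H g_0^{-1} = H$, hence $H_s = H$. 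Thus $H$ fixes every $s\in S$ near the origin, and linearity of the $H$-action on $S$ propagates this to all of $S$. The twisted product therefore collapses to a direct product,
\[
	\U \cong G\times_H S \cong (G/H)\times S \cong \Orb\times \setR^k,
\]
with $G$ acting trivially on $S\simeq\setR^k$.

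For the converse, suppose $\U\cong \Orb\times\setR^k$ is a $G$-invariant neighbourhood of $\Orb$ with trivial $G$-action on $\setR^k$. Then the isotropy of $(\omega,r)\in\Orb\times\setR^k$ equals $G_\omega$, so every orbit in $\U$ has type $[\Orb]$ and $\U\subseteq P_{[\Orb]}$. By Theorem~\ref{thmno:PrinOrb}, the principal part of $P$ is dense, hence meets the open set $\U$, which forces $[\Orb]$ to coincide with the principal orbit type.

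The substantive piece of work is the slice theorem itself, whose construction for proper (non-compact) $G$ requires an $H$-invariant Riemannian metric and its exponential map along $\Orb$; once that input is granted, the remainder is elementary bookkeeping on isotropy groups. The one point deserving care is the passage from ``$H_s$ conjugate to $H$'' to ``$H_s = H$'', which is exactly where properness (via compactness of $H$ and Lemma~\ref{lmno:conjsubgroup}) enters.
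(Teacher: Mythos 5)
Your argument is correct: the paper states this theorem as a citation to Bredon without giving its own proof, and your reduction to the slice/tube theorem, the identification of isotropy groups in $G\times_H S$ as conjugates of $H_s\subseteq H$, and the use of Lemma~\ref{lmno:conjsubgroup} (compactness of $H$ from properness) to upgrade ``conjugate and contained'' to ``equal'' is exactly the standard proof found in the cited references. Both directions are complete, and you correctly flag the only delicate step.
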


%
%
	

\section{Integration of a Cartan $1$-form}\label{secno:CartanInteg}
In this section we finally deal with the question of building a principal bundle structure starting from a manifold $P$ equipped with a Cartan $1$-form $\varpi$ with value in a Lie algebra $\Glie\ltimes \setR^n$. Recall the following Maurer-Cartan-like equation the Cartan form $\varpi$ is required to satisfy: 
\begin{equation}\label{eqno:introCartform}
	\d\varpi + \fwb\varpi\varpi = \frac12\Omega_{bc}\alpha^a\wedge\alpha^b
\end{equation}
with $\alpha=\varpi_\setR^n$ the $\setR^n$ component of $\varpi$ and $\Omega_{bc}$ unconstrained coefficients each with value in $\Glie\ltimes \setR^n$.

\subsection{Integration of a Cartan 1-form}

\subsubsection{Cartan $1$-forms}

We showed in Section~\ref{secno:GFB} that $P$ is equipped with a free action of the Lie algebra $\Glie$ and that $\varpi$ is equivariant under this action.

If the vector fields are complete the Lie algebra action is readily integrated into a Lie group action of the simply connected Lie group integration $\tilde{G}$ of $\Glie$. If not, one needs to \emph{complete} the manifold $P$ as explained in Section~\ref{secno:hmanifolds}. The Cartan form can be shown to uniquely extend to a Cartan form on the completion~\cite{PDMPhD}. Since the isotropy algebras are trivial, the isotropy groups are necessarily discrete subgroups.

Let us introduce the following intermediate notion between Cartan $1$-form and actual Cartan connection $1$-forms on principal bundles:
\begin{definition}[Generalised Cartan connection]
	Let $\Glie$ be a Lie algebra acting on $\setR^n$.
	Let $P$ be a $\Glie$-manifold. A \emph{generalised $\Glie\ltimes\setR^n$-valued Cartan connection} on $P$ is a nondegenerate $\Glie\ltimes \setR^n$-valued $1$-form $\varpi$ which is normalised for the action of $\Glie$ and satisfies the equivariance equation: 
\begin{empheq}[left={\forall \h\in\Glie, \ \empheqlbrace}]{align}
	&\varpi \lp \bh \rp = \h\\
	&\Lie_{\bh} \varpi + \ad_\h \varpi = 0\label{eqno:varpiequiv}
\end{empheq}
\end{definition}
\begin{remark}
Note that our definition differs from the one in~\cite{DiffGeoCartan}, as theirs does not impose that the form is nondegenerate, but is close to their definition of \emph{principal Cartan connection} (which requires the structure of a principal bundle). 
\end{remark}
Since $G$-manifolds are naturally $\Glie$-manifolds, generalised Cartan connections also make sense on $G$-manifolds.

Assuming that the problem of integrating the Lie algebra action to a group action was solved, a Cartan $1$-form becomes a generalised Cartan connection.
Let us now see to what extent generalised Cartan connections are related to Cartan connections on principal bundles.

\subsubsection{Principal orbits and bundle fibration}

In this section, $P$ is a connected manifold with a Cartan form $\varpi$, which defines a (free) infinitesimal action of $\Glie$. We now assume that the infinitesimal action integrates to a \emph{group} action of $G$. Namely we require univalence and completeness, as explained in Section~\ref{secno:hmanifolds}. We want to construct from $P$ a principal bundle with a Cartan connection. For this the action needs to suitable isotropy groups, as described in Theorem~\ref{thmno:QuotientManifold} and the subsequent comment.

We further require the action to be \emph{proper}.
As a consequence, the isotropy groups are compact, and the action of $\Glie$ being free, they are discrete, hence finite:
\begin{lemma}\label{thmno:DiscIso}
	Let $G$ be a Lie group and $P$ a proper $G$-manifold.
	If the infinitesimal action of $\Glie$ is free then the isotropy groups of $P$ are finite.
\end{lemma}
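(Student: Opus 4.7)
The lemma combines two ingredients already developed in the excerpt: properness gives compactness of isotropy groups, and freeness of the infinitesimal action forces their Lie algebras to be trivial. A compact Lie group with trivial Lie algebra is finite, so these two facts together yield the conclusion.

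More concretely, the plan is as follows. First, I would invoke the general result on proper actions recalled in Section~\ref{secno:ProperActions}: for any $x\in P$, the stabiliser $G_x\subset G$ is a compact (closed) Lie subgroup. Second, I would identify the Lie algebra $\Glie_x$ of $G_x$ with the infinitesimal isotropy $\{\h\in\Glie \mid \bh|_x = 0\}$. This is standard: if $\h$ lies in the Lie algebra of $G_x$, then the one-parameter subgroup $\exp(t\h)$ fixes $x$, and differentiating at $t=0$ gives $\bh|_x = 0$; conversely, $\bh|_x = 0$ implies the flow of $\bh$ fixes $x$, and under the integrated $G$-action this flow is precisely $t\mapsto x\cdot \exp(t\h)$.

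Next, I would invoke the freeness hypothesis on the infinitesimal $\Glie$-action (per the definition in Section~\ref{secno:Intro}): for every $x\in P$ and every $\h\in\Glie$, $\bh|_x = 0$ implies $\h = 0$. Combined with the previous step, this gives $\Glie_x = 0$, so $G_x$ has dimension zero, i.e.\ it is a discrete Lie subgroup of $G$.

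Finally, a discrete subset of a compact Hausdorff space is finite: since $G_x$ is compact (by properness) and discrete (by the freeness of the infinitesimal action), it must be finite. I do not anticipate any real obstacle here; the only subtle point is justifying cleanly that the Lie algebra of the isotropy group coincides with the infinitesimal isotropy, which is a direct consequence of having a genuine $G$-action integrating the $\Glie$-action (an assumption already in force by the framing of this section).
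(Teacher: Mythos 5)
Your proof is correct and follows exactly the route the paper takes (which it only sketches in the sentence preceding the lemma): properness gives compact isotropy groups, freeness of the infinitesimal action kills their Lie algebras so they are discrete, and a discrete compact group is finite. Your added care in identifying the Lie algebra of $G_x$ with the infinitesimal isotropy $\{\h\in\Glie \mid \bh|_x=0\}$ is a welcome elaboration of a step the paper leaves implicit.
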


We restrict our attention to $\Pprin$, the principal part of $P$, which is a dense open subspace (see Theorem~\ref{thmno:PrincipalOrbits}).

We need an effective group action on $\Pprin$ in order to construct a principal fibre bundle. This requires all isotropy groups to be identical, so that there is one uniquely defined quotient group of $G$ acting on $\Pprin$. In particular, the isotropy groups have to be normal subgroups of $G$, although this is no sufficient condition.

\begin{theorem}
Let $P$ be a connected $G$-manifold equipped with a $\Glie\ltimes\setR^n$-valued generalised Cartan connection.
Assume that the action of $G$ on $P$ is proper and has a single orbit type. The isotropy groups are then finite.

Assume furthermore that the isotropy groups are normal subgroups of $G$. They are then all identical. Let us call $K$ the isotropy group. 
The manifold $P$ has a free and effective action of $K \backslash G$ and forms a principal $K \backslash G$-bundle over the quotient $P/(K \backslash G)$ which has a quotient differentiable manifold structure.
The principal bundle is equipped with a solder form and a $(K \backslash G)$-principal connection.
\end{theorem}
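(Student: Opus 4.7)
The plan is to verify the four assertions of the theorem in turn. First I would establish finiteness of the isotropy groups by combining Theorem~\ref{thmno:CartformAct}, which supplies a free $\Glie$-action, with the assumed properness: properness makes each $G_p$ compact, while freeness of the $\Glie$-action forces $\operatorname{Lie}(G_p) = 0$, so $G_p$ is a zero-dimensional compact Lie group, hence finite. This is exactly Lemma~\ref{thmno:DiscIso}.

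Next I would observe that the single-orbit-type hypothesis places every pair of isotropy groups in one conjugacy class of $G$, and the normality assumption collapses that conjugacy class to a single subgroup, which I call $K \triangleleft G$. Since $K$ is closed and normal, $\bar G := K\backslash G$ inherits a Lie group structure with $\operatorname{Lie}(\bar G) = \Glie$ (as $K$ is discrete). The $G$-action on $P$ descends to an action of $\bar G$ whose stabiliser at every point is trivial, so the induced $\bar G$-action is free and effective. Properness passes to $\bar G$ because $K$ is compact: the quotient map $G \to \bar G$ is proper, hence so is $\id_P \times (G\to\bar G)$, and properness of the $\bar G$-action follows by a direct diagram chase. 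Invoking the Quotient Manifold Theorem~\ref{thmno:QuotientManifold} then equips $P/\bar G$ with a smooth Hausdorff manifold structure and makes $P \to P/\bar G$ a locally trivial $\bar G$-principal bundle.

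Finally I would identify the solder form and connection from the decomposition $\varpi = \omega \oplus \alpha$. The vertical bundle of $P\to P/\bar G$ is spanned by the fundamental vector fields of $\Glie$, which coincides with $\ker\alpha = \varpi^{-1}(0\oplus\setR^n)$, so $\alpha$ has the correct kernel. Likewise $\omega$ takes values in $\Glie = \operatorname{Lie}(\bar G)$ and is normalised on $\Glie$ because $\varpi(\bh) = (\h,0)$. For the equivariance requirements, the infinitesimal $\Glie$-equivariance equation \eqref{eqno:varpiequiv} integrates along paths in the identity component of $\bar G$ to give $\bar G^0$-equivariance of both $\omega$ and $\alpha$; this then yields the solder form and principal connection axioms on the fibration $P\to P/\bar G$.

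The main obstacle is the equivariance step at the end: the generalised Cartan connection provides only an infinitesimal $\Glie$-action, so promoting $\Glie$-equivariance to genuine $\bar G$-equivariance requires integrating along paths and handling the component group of $\bar G$ separately. In particular, one must verify that $K$ acts trivially on $\setR^n$ so that the representation $G\to\GL(n)$ factors through $\bar G$; this follows because at a $K$-fixed point $p$ the linearisation of $k\in K$ preserves $H_pP$ and, via the isomorphism $\alpha_p\colon H_pP \isom \setR^n$, is intertwined with the representation, so compatibility with the principal bundle structure over $P/\bar G$ pins down the trivial action. Carrying out this integration carefully, together with the checks on properness descending to $\bar G$, occupies the bulk of the technical work.
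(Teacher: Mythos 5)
Your proof is correct and follows essentially the same route as the paper's: factor the action through $K\backslash G$, invoke the quotient manifold theorem (Theorem~\ref{thmno:QuotientManifold}), and read off the solder form and principal connection from the two components of $\varpi$. You also supply details that the paper's one-paragraph proof leaves implicit --- that properness descends to $K\backslash G$, that $K$ must act trivially on $\setR^n$ for the solder form to be well defined (the paper only establishes this afterwards, in Lemma~\ref{thmno:IsoKernel}), and that the infinitesimal $\Glie$-equivariance must be integrated to genuine group equivariance --- which strengthens rather than changes the argument.
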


\begin{proof}
The action of $G$ naturally factors to a free action of $K \backslash G$. Theorem~\ref{thmno:QuotientManifold} applies and the orbit space $P/(K \backslash G)\simeq P/G$ is a manifold over which $P$ forms a principal $K \backslash G$-bundle. Since $K$ is discrete, the Lie algebras of $K \backslash G$ is identified with $\Glie$. The vertical distribution integrates to the fibres of the principal bundle fibration. The $\setR^n$ component of the generalised Cartan connection on $P$ defines a solder form and the $\Glie$ component a $K\backslash G$-principal connection on $P$.
\end{proof}

The Cartan connection imposes constraints on the isotropy subgroups. Because we restrict our attention to the principal isotypic component, isotropy groups act trivially on the normal tangent bundle to the orbits (Theorem~\ref{thmno:PrinOrb}). The tangent spaces to the orbits constitute the (integrable) distribution $(\ker \alpha)$, so $\alpha^a$ forms a coframe of the normal bundle to the orbits. Furthermore $\alpha$ is by hypothesis equivariant under $G$ so that it is equivariant, hence invariant, under the isotropy groups of the principal orbits. One concludes that the principal isotropy groups act trivially on $\setR^n$:
\begin{lemma}\label{thmno:IsoKernel}
Let $P$ be a proper $G$-manifold with a generalised $(\Glie\ltimes\setR^n)$-valued Cartan connection $\varpi$.\\
The principal isotropy groups of $P$ are subgroups of the kernel of the action
\[
	G \xrightarrow{\Ad} \End_\setR (\setR^n)
\]
\end{lemma}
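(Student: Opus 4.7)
The strategy is to use the non-degeneracy and equivariance of $\alpha$ to transfer the triviality of the action of a principal isotropy group on the normal bundle (guaranteed by the principal orbit structure) into triviality of its action on $\setR^n$.

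First I would verify that at every point $p\in P$, the tangent space to the $G$-orbit is exactly $\ker\alpha|_p$. Since $\varpi=\omega\oplus\alpha$ is a generalised Cartan connection, the fundamental vector fields satisfy $\varpi(\bh)=(\h,0)$, hence $\alpha(\bh)=0$, so tangent vectors to the orbit lie in $\ker\alpha|_p$. By Lemma~\ref{thmno:DiscIso}, the isotropy of $G$ is discrete, so the orbit has dimension $\dim\Glie$, which matches $\dim\ker\alpha|_p=\dim P-n$ since $\alpha$ is non-degenerate. Consequently $\alpha|_p$ factors through an isomorphism
\[
\bar\alpha_p : T_pP/T_p(p\cdot G)\isom\setR^n.
\]

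Next I would use Theorem~\ref{thmno:PrincipalOrbitNeighbourhood}: for $p$ in the principal part, the orbit $p\cdot G$ admits a $G$-invariant neighbourhood $G$-equivariantly diffeomorphic to $(p\cdot G)\times\setR^k$, with $G$ acting trivially on the $\setR^k$-factor. This supplies a $G_p$-invariant decomposition $T_pP\simeq T_p(p\cdot G)\oplus\setR^k$ on which any $g\in G_p$ acts by the identity on the second summand. In particular $\d R_g|_p$ induces the identity on the quotient $T_pP/T_p(p\cdot G)$.

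Finally I would combine these facts with the equivariance of $\alpha$. Writing $\rho:G\to\GL(\setR^n)$ for the action, the integrated form of Equation~\eqref{eqno:varpiequiv} reads $R_g^*\alpha=\rho(g)^{-1}\cdot\alpha$. Since $R_g(p)=p$, applying both sides to any $X\in T_pP$ yields
\[
\bar\alpha_p\bigl([\d R_g|_p X]\bigr)=\rho(g)^{-1}\,\bar\alpha_p\bigl([X]\bigr).
\]
The left-hand side equals $\bar\alpha_p([X])$ by the previous paragraph, and $\bar\alpha_p$ is surjective onto $\setR^n$, so $\rho(g)=\id_{\setR^n}$, i.e.\ $g\in\ker\rho$. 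There is no real obstacle here; the only point requiring care is the dimension count identifying $\ker\alpha|_p$ with the orbit tangent space, which relies on the freeness of the $\Glie$-action (Theorem~\ref{thmno:CartformAct}) together with Lemma~\ref{thmno:DiscIso}.
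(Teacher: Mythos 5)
Your proof is correct and follows essentially the same route as the paper's: both use Theorem~\ref{thmno:PrincipalOrbitNeighbourhood} to see that a principal isotropy element acts trivially transversally to the orbit, and then combine the $G$-equivariance of $\alpha$ with its surjectivity onto $\setR^n$ (and its vanishing on orbit directions) to conclude that the element lies in the kernel of the representation. The only cosmetic difference is that you phrase the transverse action via the quotient $T_pP/T_p(p\cdot G)$ while the paper works directly with the horizontal distribution $HP$ identified with the $\setR^k$ factor.
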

In particular, when the action of $G$ on $\setR^n$ is faithful, the principal isotropy groups have to be trivial.

\begin{proof}
	Let $\Orb \subset P$ be a principal orbit.
	According to Theorem~\ref{thmno:PrincipalOrbitNeighbourhood}, $\Orb$ admits a neighbourhood in $P$ which is $G$-equivariantly diffeomorphic to $\Orb \times \setR^k$ for some $k\in \setN$ and we can assume $P=\Orb\times\setR^k$ without loss of generality. The vertical distribution is $T\Orb \times 0_{T\setR^k}$ and the horizontal distribution is $0_{T\Orb} \times T\setR^k$.
	Since $\alpha$ induces a trivialisation $HP \isom \setR^n \times P$, necessarily $k=n$.

	Let $p\in \Orb$ and $g\in \Stab_G (p)$. Then $g$ preserves $HP$ and $VP$, moreover since $P= {\Orb\times \underbrace{\setR^n}_{\text{trivial action}} }$, the action of $g$ on horizontal vectors in $H_p P$ is trivial. 
	But $\alpha$ is $G$-equivariant thus
	\[
	\forall u\in H_p P, \quad
		g\cdot \alpha (u)
			= \alpha(u\cdot g^{-1})
			= \alpha(u) 
	\]
	Finally, $\alpha(T_p P) = \setR^n$ so that $g$ needs to act trivially on the linear representation $\setR^n$
\end{proof}

\begin{example}
Let $P$ be a $\SO(n)$-manifold.

Let $\varpi$ be a generalised $\so(n)\ltimes \setR^n$-valued Cartan connection on $P$. Then the action of $\SO(n)$ is both proper and free over the principal isotypic component $\Pprin$ (due to Lemma~\ref{thmno:IsoKernel}). It thus defines a principal bundle $\Pprin\to \Pprin/\SO(n)$ over the orbit space which is a smooth Hausdorff manifold, and $\varpi$ defines an $\SO(n)$-principal connection.
\end{example}

The examples of locally Klein geometries introduced in Section~\ref{secno:examples} are examples of this construction. In particular, Example~\ref{secno:exspinchir} has an action of $\Spin(4)$ which is free except on one single exceptional orbit which induces a localised singularity on the orbit space.

\subsection{$\Glie\ltimes\setR^n$-valued Cartan $1$-form on a compact manifold with compact $G$}

Under compactness assumptions, many hypotheses of the construction become automatically satisfied. 
Let $P$ be a compact manifold. 
Let $G$ be a compact and simply-connected Lie group with a linear action on $\setR^n$. 
Assume that $P$ is equipped with a $\Glie\ltimes\setR^n$-valued Cartan $1$-form $\varpi$.

The Cartan $1$-form defines a free action of $\Glie$ on $P$ according to Theorem~\ref{thmno:CartformAct}. Since the manifold $P$ is compact, the action of $\Glie$ is complete. Hence the infinitesimal action integrates to a Lie group action of the simply-connected integration $G$ of $\Glie$, as stated in Section~\ref{secno:hmanifolds}.

Since $\tilde{G}$ is compact, the action is necessarily \emph{proper}.
In particular, it has a principal orbit type. 
The principal part of $P$ is a dense open subset (Theorem~\ref{thmno:PrinOrb}), its orbit space $\Pprin/G$ is a smooth manifold according to Theorem~\ref{thmno:QuotientManifold} and the quotient map $\Pprin \to \Pprin/G$ defines a fibre bundle.
All the isotropy groups are necessarily finite (Lemma~\ref{thmno:DiscIso}). This is summed up in the following Lemma:
\begin{lemma}
Let $G$ be a simply connected compact Lie group with Lie algebra $\Glie$ and $P$ a compact $\Glie$-manifold. 
Then the action of $\Glie$ on $P$ integrates into a group action of $G$ and there is a finite subgroup $K\subset G$ and a dense open subset $\U\subset P$ stable under $G$ such that
$\U/G$ is a smooth manifold and the map
\[ \U\to\U/G \]
defines a fibre bundle with typical fibre $K \backslash G$ which is an homogeneous $G$-manifold covered by $G$.
\end{lemma}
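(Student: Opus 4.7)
The plan is to verify that, under these compactness hypotheses, every ingredient already developed in Sections~\ref{secno:hmanifolds} and~\ref{secno:ProperActions} applies directly, so that the lemma follows by stringing together the theorems established there. Concretely, I would integrate the Lie algebra action, use properness to invoke the principal orbit type machinery, and then read off the fibre bundle structure on the principal part.

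First, since $P$ is compact, every smooth vector field on $P$ is complete; in particular, the fundamental vector fields $\bh$ of the $\Glie$-action all are. Combined with the simple-connectedness of $G$, Proposition~\ref{propno:ActInteg} yields a smooth right action of $G$ on $P$ integrating the infinitesimal $\Glie$-action. Compactness of $G$ then immediately implies that this action is \emph{proper} in the sense of Section~\ref{secno:ProperActions}.

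Next, I would invoke the principal orbit type theorem (Theorem~\ref{thmno:PrincipalOrbits}): there is a unique maximal isotropy type, the corresponding principal part $\Pprin\subset P$ is a $G$-stable dense open subset, and I set $\U:=\Pprin$. Pick $K$ to be the isotropy group of any point of $\U$; by definition of \enquote{same type}, all isotropy groups over $\U$ are conjugate to $K$. Under the implicit freeness of the underlying $\Glie$-action (which is the setting in which this lemma is used---namely that of a Cartan $1$-form, by Theorem~\ref{thmno:CartformAct}), Lemma~\ref{thmno:DiscIso} forces $K$ to be discrete; together with the compactness of $K$ coming from properness, this makes $K$ a finite subgroup of $G$.

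Finally, the orbit space decomposition of Section~\ref{secno:OrbitSpace} applied to the single orbit type present in $\U$ provides the quotient manifold structure on $\U/G$ and shows that $\U\to\U/G$ is a locally trivial fibre bundle whose typical fibre is the principal orbit; by Proposition~\ref{propno:OrbitCoset} this orbit is $G$-equivariantly diffeomorphic to $K\backslash G$, which is covered by $G$ since $K$ is discrete. The only potentially delicate step is the integration of the Lie algebra action: the two generic obstacles identified in Section~\ref{secno:hmanifolds}---completeness and univalence---are respectively neutralised by compactness of $P$ and by simple-connectedness of $G$, so the remainder of the argument is essentially bookkeeping of earlier results.
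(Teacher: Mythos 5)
Your proof is correct and follows essentially the same route as the paper: compactness of $P$ gives completeness of the fundamental vector fields, simple connectedness of $G$ gives univalence (so Proposition~\ref{propno:ActInteg} integrates the action), compactness of $G$ gives properness, and the principal orbit type theorem together with the quotient results of Section~\ref{secno:OrbitSpace} yields the fibre bundle structure on the dense open principal part with typical fibre $K\backslash G$. You are also right to flag that the finiteness of $K$ requires the (infinitesimal) freeness of the $\Glie$-action, which the lemma's statement leaves implicit but which holds in the Cartan-form setting where it is applied via Lemma~\ref{thmno:DiscIso} --- the paper relies on exactly the same implicit hypothesis.
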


When the action of $G$ on $\setR^n$ is faithful, Lemma~\ref{thmno:IsoKernel} implies that the principal isotropy groups are necessarily trivial.

\begin{theorem}
Let $P$ be a compact manifold. Let $G$ be a simply connected compact Lie group with a Lie algebra $\Glie$ and a faithful action on $\setR^n$. Let $\varpi=\omega\oplus\alpha\in\Omega^1(P,\Glie\ltimes\setR^n)$ be a generalised Cartan connection on $P$. 

Then there exists a dense open subset $\U\subset P$ stable under $G$ such that $\U/G$ is a smooth manifold, 
\[ \U\to\U/G \]
is an $G$-principal bundle, $\alpha$ is a solder form and $\omega$ is a $G$-principal connection on the fibre bundle.
\end{theorem}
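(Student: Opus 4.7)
The plan is to apply the preceding lemma in two steps: first assemble the group action and its properness, then extract the principal part and verify the bundle structure.

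First, I would use Theorem~\ref{thmno:CartformAct} to get from $\varpi$ a free infinitesimal $\Glie$-action on $P$ for which $\varpi$ is equivariant. Since $P$ is compact all the fundamental vector fields are complete, and since $G$ is simply connected (hence the univalence obstruction vanishes), Proposition~\ref{propno:ActInteg} integrates the infinitesimal action to a smooth right action of $G$ on $P$. Compactness of $G$ forces this action to be proper (\cite{GroupActions}).

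Next, I would invoke Theorem~\ref{thmno:PrincipalOrbits} to produce the dense open $G$-stable subset $\U \subset P$ consisting of the principal orbits. The infinitesimal freeness gives, via Lemma~\ref{thmno:DiscIso}, that all isotropy groups are finite. Applied to the principal isotropy groups $K$, Lemma~\ref{thmno:IsoKernel} says $K$ lies in the kernel of $G \xrightarrow{\Ad} \End_\setR(\setR^n)$; by the faithfulness assumption this kernel is trivial, so $K = \{e\}$. Thus $G$ acts freely on $\U$, and since the restricted action remains proper, the Quotient Manifold Theorem~\ref{thmno:QuotientManifold} gives that $\U/G$ is a smooth (Hausdorff) manifold and $\U \to \U/G$ is a locally trivial $G$-principal bundle.

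It remains to recognize $\alpha$ as a solder form and $\omega$ as a principal connection on this bundle. For each $\h \in \Glie$ the fundamental vector field $\bh$ is, by construction of the $\Glie$-action from $\varpi$, characterised by $\varpi(\bh) = (\h,0)$; hence $\omega(\bh) = \h$ (the normalisation condition) and $\ker \alpha$ is exactly the vertical distribution of the fibration (equal to the span of the $\bh$). Equivariance of $\varpi$ under $\Glie$ is the infinitesimal form of $G$-equivariance; since $G$ is connected and the $G$-action integrates the $\Glie$-action, $G$-equivariance of $\varpi$ follows, which splits into $\Ad$-equivariance of $\omega$ and $G$-equivariance of $\alpha$. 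These are precisely the defining conditions of a principal connection and of a solder form.

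The main obstacle I expect is the interplay between the Lie algebra-level data one is handed and the Lie group-level conclusions one wants; concretely, checking that properness together with the ``faithful on $\setR^n$'' hypothesis truly forces the \emph{principal} isotropy to be trivial (as opposed to merely acting trivially on some local slice), which is exactly where Lemma~\ref{thmno:IsoKernel} is decisive. Once that point is conceded, the remainder of the argument is a clean packaging of the quotient-manifold machinery.
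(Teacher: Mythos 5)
Your proposal is correct and follows essentially the same route as the paper: integrate the free $\Glie$-action (completeness from compactness of $P$, univalence from simple connectedness of $G$), get properness from compactness of $G$, pass to the principal part via Theorem~\ref{thmno:PrinOrb}, kill the principal isotropy with Lemma~\ref{thmno:IsoKernel} and faithfulness, and conclude with the Quotient Manifold Theorem. Your closing paragraph identifying $\omega$ and $\alpha$ via the normalisation and the integrated $G$-equivariance of $\varpi$ is a welcome explicit check that the paper leaves implicit.
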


\begin{example}
Let $P$ be a compact manifold equipped with a $\spin(n)\ltimes \Sigma^n$-valued Cartan $1$-form $\varpi$, with $\Sigma^n$ a faithful spinorial representation of $\spin(n)$. 

The Cartan $1$-form induces an action of $\spin(n)$ on $P$, which is necessarily complete and integrates to a group action of $\Spin(n)$, under which $\varpi$ is equivariant. 
The action of $\Spin(n)$ is proper and since $\Sigma^n$ is a faithful representation of $\Spin(n)$ the principal isotypic component $\Pprin$ is free under the group action.

One concludes that $\Pprin$, the principal part of $P$, defines a principal bundle $\Pprin\to \Pprin/\Spin(n)$ over its orbit space, which is a smooth Hausdorff manifold, and $\varpi$ defines a solder form and a $\Spin(4)$-principal connection.
\end{example}

Now, in the general case, the total orbit space is an orbifold (as mentionned in Section~\ref{secno:OrbitSpace}). Even in this case, the structure we obtain can be understood as a principal connection on the frame bundle. The theory of frame bundles and connections on orbifolds is exposed in~\cite{GStructOrbi}.


\printbibliography

\end{document}